\newcounter{savepagenumber}
\newcommand\mainmatter{%
  \cleardoublepage
  \setcounter{savepagenumber}{\value{page}}
  \@mainmattertrue
  \pagenumbering{arabic}%
}
\newcommand\backmatter{%
  \if@openright
    \cleardoublepage
  \else
    \clearpage
  \fi
  \pagenumbering{roman}%
  \setcounter{page}{\value{savepagenumber}}%
  \@mainmatterfalse
}
\begin{document}

\title{An Efficient Framework for Global Non-Convex Polynomial Optimization over the Hypercube}

\author{Pierre-David Letourneau\thanks{Qualcomm Technologies Inc., \email{pletourn@qti.qualcomm.com}} \and Dalton Jones\thanks{Qualcomm Technologies Inc., \email{daltjone@qti.qualcomm.com}} \and Matthew Morse \thanks{Qualcomm Technologies Inc., \email{mattmors@qti.qualcomm.com}} \and M. Harper Langston \thanks{Qualcomm Technologies Inc., \email{hlangsto@qti.qualcomm.com}}}

\maketitle

\begin{abstract}
    We present a novel, efficient, theoretical, and numerical framework for solving global non-convex polynomial optimization problems. We analytically demonstrate that such problems can be efficiently reformulated using a non-linear objective over a convex set; further, these reformulated problems possess no spurious local minima (i.e., {\em every local minimum is essentially a global minimum}). Through numerical experiments, we achieve polynomial scaling in dimension and degree when computing the optimal value and location of previously intractable high-dimension global polynomial optimization problems. 
\end{abstract}

\begin{keywords}
    non-convex optimization, global optimization, polynomial optimization, numerical optimization.
\end{keywords}

\section{Introduction}
\label{sec:introduction}
For polynomials $p(x)$ in $D \in \mathbb{N}$ dimensions of degree at most $d \in \mathbb{N}$, consider  global non-convex polynomial optimization problems of the form
\begin{align}
    \label{eq:problemcontinuous}
    \min_{[-1,1]^D} & \;\; p(x).
\end{align}

Based on ideas proposed by Lasserre in~\cite{lasserre2001global}, we have developed a novel theoretical and numerical framework for efficiently solving problems in the form of~\eqref{eq:problemcontinuous}; specifically, we efficiently reformulate such problems into ones involving a non-linear objective over a convex cone of semi-definite matrices in $O(D\cdot d^2)$ dimensions.  Further, our reformulation possesses no spurious local minima.


Our main contribution consists of an efficient reformulations of global polynomial optimization problems, where {\em every local minimum is essentially a global minimum}. Importantly, our framework opens the door to the use of pure descent methods to perform global optimization. 

Our reformulation involves a canonical non-linear objective over a cone of semi-definite matrices (Problem \ref{eq:reformulationcontinuous}). Both the original formulation and our proposed reformulation {\em share the same optimal value} (shown in Theorem \ref{thm:equivalence}) and {\em location} (discussed in Section~\ref{sec:optimal_location}).

The remainder of this paper is structured as follows: Previous work is discussed in Section~\ref{subsec:previous_work} while Section~\ref{sec:notation} introduces notation. Our novel reformulation of~\eqref{eq:problemcontinuous} is presented in Section~\ref{sec:theory}, together with the associated theoretical results. Section~\ref{sec:numerical_results} presents numerical results demonstrating the correctness and performance of our framework. Finally, Section~\ref{sec:conclusion} contains conclusions and a discussion of future work. Proofs can be found in Appendix~\ref{sec:proofs}, while details regarding our numerical algorithm may be found in Appendix ~\ref{sec:algorithm}.

\subsection{Previous Work}
\label{subsec:previous_work}
For problems of the form~\eqref{eq:problemcontinuous}, our reformulation and efficient framework are inspired by the polynomial optimization framework introduced by 
Lasserre~\cite{lasserre2001global}, who proposed reformulating~\eqref{eq:problemcontinuous} as a Semi-Definite Program (SDP). Specifically, instead of evaluating polynomials directly, the optimization can be performed by integrating over specific multidimensional measures, the moments of which then replace the monomials of the original polynomial in the objective. The primary advantage of this approach is that the resulting problem is convex; however, rather than provide an exact reformulation, Lasserre's approach~\cite{lasserre2001global} results in a convex relaxation, where the solution provides a {\em lower bound} on the optimal minimum.  The quality of this bound depends on the degree of the Sums-Of-Squares (SOS)~\cite{doi:10.1137/040614141,prajna2002introducing} polynomials used to generate the relaxation; moreover, better relaxations increase the size of the resulting SDP, adversely affecting scaling. In~\cite{lasserre2001global,Parrilo2003} it was demonstrated that any sequence of problems of increasing SOS degree (also known as the ``Lasserre Hierarchy'') will eventually converge to an exact solution in finite time. Although this hierarchy has been found to converge rapidly on some specific problems (see, e.g.,~\cite{de2022convergence}), theoretical estimates indicate that in the worst case, super-exponential sizes are required~\cite{Klerk2011}.

Proofs of convergence and degree estimates for the approach of~\cite{lasserre2001global} rely on results by Pitunar~\cite{putinar1993positive})(``Putinar Positivstellensatz'') and Nie and Schweighofer~\cite{nie2007complexity}. The results of~\cite{putinar1993positive,nie2007complexity} address the problem of expressing positive polynomials over compact semi-algebraic subsets of Euclidean spaces through SOS, or SOS-like, representations of finite degree. While they demonstrate that this can be done in any dimension, these estimates demand a super-exponential SOS degree of the original problem in both dimension and degree; this results in intractable estimates for the cost of solving polynomial optimization problems using~\cite{lasserre2001global}'s framework in the general case. 

By restricting the space of feasible positive polynomials (and measures) to combinations of products of positive one-dimensional polynomials (and product measures), {\em our framework and results overcome the drawbacks of~\cite{lasserre2001global}}. Indeed, Proposition~\ref{prop:measureextension} demonstrates that such polynomials \emph{can} be represented efficiently through a SOS representation (specifically, at a cost \emph{linear} in the dimension and degree of the original problem).  In this restricted case, the moment problem~\footnote{i.e., the problem of extending elements of the dual space of polynomials to positive product measures.} can be characterized and solved efficiently, requiring the knowledge of only linearly many moments in the dimension and degree of the original problem.  Such conclusions are much stronger than the general case considered in~\cite{lasserre2001global} and highlight the efficiency of our novel reformulation.

\subsection{Notation}
\label{sec:notation}

This section introduces the notation used throughout the remainder of the paper. We represent the \emph{dimension} of Problem~\ref{eq:problemcontinuous} as $D \in \mathbb{N}$, and we use $d$ to represent the \emph{problem degree} (i.e., the degree of the objective polynomial). A $D$-dimensional \emph{multi-index} ${\bf n}=(n_1, n_2, ..., n_D)$ is an element of $\mathbb{N}^D$, where $\lvert {\bf n} \rvert = \sum_{i=1}^D {\bf n}_i$ refers to the \emph{multi-index degree}. Unless otherwise stated, a basis of \emph{monomials} of the form,
\begin{equation}
    x^{\bf n} = \prod_{i=1}^D x_i^{{\bf n}_i}
\end{equation}
is used to represent elements of $\mathbb{P}_{D,d}$, \emph{the vector space of polynomials of degree $d$ in dimension $D$}:  
\begin{equation}
\label{eq:P_Dd}
    \mathbb{P}_{D,d} := \left \{ \sum_{\lvert {\bf n} \rvert \leq d} \, p_{\bf n} \, x^{\bf n} : \, p_{\bf n} \in \mathbb{R} \; \forall \; {\bf n} \in \mathbb{N}^D \, \mathrm{s.t.} \, \lvert {\bf n} \rvert \leq d \right \},
\end{equation}
where the short-hand notation 
\begin{equation*}
    \sum_{\lvert {\bf n} \rvert \leq d} a_{\bf n}= \sum_{ {\bf n} \in \{ {
    \bf n} : \lvert {\bf n} \rvert \leq d \} } a_{(n_1, n_2, ..., n_D)}
\end{equation*}
represents summations over all multi-indices of degree less than or equal to $d$.  In particular, note that $\mathbb{P}_{D,d}$ has dimension
\begin{equation}
    \# \{ {\bf n} \in \mathbb{N}^D \, : \, \lvert {\bf n} \rvert \leq d \} = \binom{D+d}{d} \leq \min \left ( D^d, d^D \right ).
\end{equation}
Hence, for a fixed degree $d$, the dimension of the vector space of polynomials space $\mathbb{P}_{D,d}$ grows \emph{polynomially} in the problem dimension $D$. The notation $\mathbb{P}_{D,\infty}$ is used to represent polynomials of any finite degree. The \emph{support of a polynomial} $p \in \mathbb{P}_{D,\infty}$ corresponds to the locations of its nonzero coefficients:
\begin{equation}
    \mathrm{supp}(p) := \{ {\bf n} \in \mathbb{N}^D \, : \, p_{\bf n} \not = 0 \}, 
\end{equation}
and the cardinality (number of elements) of the support is denoted by
$$N(p) := \# \, \mathrm{supp}(p).$$

The set of regular Borel measures over $\mathbb{R}^D$ (\cite{cohn2013measure}) is denoted as $\mathbb{B}_{D}$, and
the \emph{support of a measure} $\mu(\cdot) \in \mathbb{B}_D$ is designated by $\mathrm{supp}(\mu(\cdot))$, representing the smallest set such that $\mu \left ( \mathbb{R}^D \backslash \mathrm{supp}(\mu(\cdot)) \right )  = 0$. 
For a fixed multi-index ${\bf n}$, a $D$-dimensional Borel measure  $\mathbb{B}_{D}$ has an ${\bf n}^{th}$ \emph{moment} defined to be the multi-dimensional Lebesgue integral,
\begin{equation}
    \mu_{\bf n} := \int_{\mathbb{R}^D} x^{\bf n} \, \mathrm{d} \mu(x).
\end{equation}
A \emph{regular product measure}~\footnote{When discussing a measure, parentheses are employed (e.g., $\mu(\cdot)$), whereas the absence of parentheses indicates that we are discussing vectors.} is a regular Borel measure of the form
\begin{equation}
    \mu(\cdot) = \prod_{i=1}^D  \mu_i(\cdot) ,
\end{equation}
where each factor $\mu_i(\cdot)$ belongs to $\mathbb{B}_1$. In this context, the \emph{moments of a product measure} up to degree $d$ correspond to a $D$-tuple of real vectors $( \mu_1, \mu_2, ..., \mu_D ) \in \mathbb{R}^{(2d+1) \times D}$, where each element is defined as
\begin{equation}
    \mu_{i,{\bf n}_i} := \int_{\mathbb{R} } x_i^{{\bf n}_i} \, \mathrm{d} \mu_i (x_i).
\end{equation}

The vector of \emph{moments of the 1D measure} $\mu_i(\cdot)$ is denoted as $\mu_i$.  When a single sub-index is present, $\mu_i \in \mathbb{R}^{2d+1}$ represents the vector of moments, and when two sub-indices are used, $\mu_{i,{\bf n}_i} \in \mathbb{R}$  represents scalar moments (vector elements).\footnote{For a degree $d$, we assume that such vectors contain moments up to degree $2d$. This is necessary for the proper characterization of moment matrices below.}  A set of $L$ moments of product measures is represented by
$$\mu = \{ (\mu_1^{(l)}, ..., \mu_D^{(l)} ) \}_{l} \in \mathbb{R}^{(2d+1) \times D \times L},$$
and we use the symbol $\phi_{\bf n}(\cdot)$ to represent the moments of the linear combination of such measures, i.e.,
\begin{eqnarray}
    \phi_{\bf n}(\mu) := \sum_{l=1}^L  \prod_{i=1}^D \mu^{(l)}_{i,{\bf n}_i}.
\end{eqnarray}

Given a sequence of moments $\mu_i$ of a 1D measure, the \emph{degree-$d$ 1D moment matrix} (or just \emph{moment matrix} when the context is clear) associated with the one-dimensional polynomial $g(x_i) \in \mathbb{P}_{1,d^\prime}$, $0 \leq d^\prime < \infty$ is defined to be the $(d+1) \times (d  + 1) $ matrix $\mathcal{M}_{d} (\mu_i, g(x_i))$ with elements
 \begin{equation}
     [ \mathcal{M}_{d}(\mu_i, g(x)) ]_{{\bf m}_i,{\bf n}_i} = \int g(x_i) \, x_i^{{\bf m}_i + {\bf n}_i} \, \mathrm{d} \mu_i(x_i) = \sum_{k_i=0}^{d^\prime} g_{k_i} \, \mu_{i, (k_i+{\bf m}_i+{\bf n}_i)}
 \end{equation}
for $0 \leq {\bf m}_i, {\bf n}_i \leq d $. When $g(\cdot ) \equiv 1$, we use the short-hand notation
\begin{equation}
    \mathcal{M}_d(\mu_i; 1) = \mathcal{M}_d(\mu_i).
\end{equation}
The construction of such matrices requires the knowledge of only the first $2 d + d^\prime + 1$ moments of $\mu_i (\cdot) $; i.e., $\{ \mu_{i,{\bf n}_i}\}_{{\bf n}_i=0}^{2d + d^\prime} $. We refer to  Proposition~\ref{prop:measureextension}, where it is shown that {\em positive definiteness} of the matrix $\mathcal{M}_d(\mu_i)$ and $\mathcal{M}_{d-1}(\mu_i; 1-x_i^2)$ are necessary and sufficient to ensure the existence of positive measures with such moments. Unless one considers product measures, however, this is true only in the 1D case and generally false in higher dimensions.

\section{Efficient Reformulation for Global Non-Convex Polynomial Optimization Problems}
\label{sec:theory}
This section presents our novel reformulation and theoretical results, demonstrating the correctness and efficiency of the approach for problems of the form in~\eqref{eq:problemcontinuous}. Specifically, given some integer $L$, our reformulation of~\eqref{eq:problemcontinuous} takes the form,
\begin{align}
    \label{eq:reformulationcontinuous}
    \min_{  \mu := \{(\mu^{(l)}_1, ..., \mu^{(l)}_D) \}_{l=1}^L\in \mathbb{R}^{(2d+1)\times D \times L} }&\;\; \sum_{{\bf n} \in \mathrm{supp}(p)} \, p_{\bf n} \, \phi_{\bf n} (\mu) \nonumber \\
    \mathrm{subject\, to} &  \;\; \mathcal{M}_d (\mu^{(l)}_i) \succeq 0,  \\
    &\;\; \mathcal{M}_{d-1} (\mu^{(l)}_i; \, 1-x_i^2) \succeq 0,  \;&  i=1,..., D, l=1,..., L \nonumber \\
    &\;\; \phi_{(0,...,0)}  (\mu)  = 1, \nonumber  \\
    & \;\;  \phi_{\bf n} (\mu)  = \sum_{l=1}^L  \prod_{i=1}^D \mu^{(l)}_{i,{\bf n}_i}. \nonumber
\end{align} 

This reformulation corresponds to a nonlinear objective subject to (convex) semi-definite constraints. Further, because the feasible set lies in a space of size $L \cdot D \cdot (2d + 1) $~\footnote{Note that this space is linear in both degree and dimension}, our reformulation is achieves computational and storage efficiency (see Table~\ref{tab:cost} for more details on computational costs).

Following Proposition \ref{prop:measureextension}, the problem may also be interpreted as the minimization of a linear objective over the set consisting of the normalized convex combinations of $L$ positive product measures (with mass equal to $1$). This interpretation is key for demonstrating how our reformulation of~\ref{eq:reformulationcontinuous} is in fact \emph{equivalent} to~\ref{eq:problemcontinuous} as demonstrated by Theorems~\ref{thm:equivalence}-\ref{thm:globalmin}.

\begin{table}[htbp!]
\small
    \centering
    \begin{tabular}{|c|c|c|c|c|} \hline
        Type & Expression  & Type &  Size & Eval. Cost \\ \hline
        {\bf O} & $\sum_{{\bf n} \in \mathrm{supp}(p)} \, p_{\bf n} \left ( \sum_{l=1}^L  \prod_{i=1}^D \mu^{(l)}_{i,{\bf n}_i} \right )$ & Scalar & $1$ & $ O\left ( N(p) \, L \, D \right )$  \\ \hline
        {\bf C} & $ \mathcal{M}_d \left ( \mu^{(l)}_i \right ) \succeq 0$ & PSD & $(d+1) \times (d+1)$ & $  O\left ( L \, D \, d^2 \right )$  \\ \hline
        {\bf C} & $ \mathcal{M}_{d-1} \left (\mu^{(l)}_i; \, 1-x_i^2 \right )   \succeq 0 $ & PSD & $ d \times d$ & $  O\left ( L \, D \, d^2 \right )$  \\ \hline
        {\bf C} & $  \sum_{l=1}^L \prod_{i=1}^D \mu^{(l)}_{i,0} = 1 $ & Scalar & $1$ & $  O\left ( L \, d \right )$  \\ \hline
    \end{tabular}
    \caption{\footnotesize{Size and evaluation costs of each component (Objective (O) or Constraints (C) Category Type) of the proposed reformulation.}}
    \label{tab:cost}
\end{table}

In the following section, we introduce the theory for how our framework does not suffer from some of the drawbacks of existing approaches; in fact, (1) the dimension of our reformulated problem is fixed and of size at most polynomial in the dimension and degree of the original problem, (2) the optimal value obtained matches that of the original problem exactly, and (3) no hierarchy of problems is required as in Lasserre's formulation~\cite{lasserre2009moments}.

\subsection{Theoretical Results}
\label{sec:reformulation_theory}
The key advantages of our reformulation~\eqref{eq:reformulationcontinuous} are highlighted below in Theorems~\ref{thm:equivalence} and~\ref{thm:globalmin}.  Specifically, Theorem~\ref{thm:equivalence} demonstrates the equivalence between the original and reformulated problem, and Theorem~\ref{thm:globalmin} demonstrates that the reformulated problem may be solved efficiently using local descent techniques only.

In order to prove Theorem~\ref{thm:equivalence}, we rely on Proposition~\ref{prop:measureextension} below, which demonstrates the possibility of efficiently (i.e., at a polynomial cost) extending linear functionals belonging to the dual space $\mathbb{P}_{D,d}^*$ when the latter are in \emph{product form}. This is in contrast to the general case which rules out such extension, a consequence related to the fact that positive polynomials cannot generally be written as a combination of sums-of-squares having a sufficiently small degree (see, e.g., Motzkin~\cite{theodore1965motzkin}). This underlies key differences between our framework and that introduced by Lasserre~\cite{lasserre2009moments}. Indeed, the lack of existence of an efficient representation in Lasserre's formulation leads to reformulations (i.e., semi-definite relaxations) that provide a \emph{lower bound}. To obtain a solution to the original problem under Lasserre's framework, it is necessary to consider a \emph{hierarchy of problems} (the aforementioned ``Lasserre Hierarchy'') of increasing, potentially \emph{exponential} size. 

The theoretical discussion below highlights how our reformulation~\eqref{eq:problemcontinuous} avoids many of the pitfalls of prior approaches with the following key advantages:
\begin{enumerate}
    \item For our reformulation, the dimension is of fixed size that is at most polynomial in the dimension and degree of the original problem;
    \item For our reformulation, we obtain an optimal value equivalent to the original problem;
    \item For our reformulation, we do not require a hierarchy of problems.
\end{enumerate}; 

For the following theorems, proofs are provided in Appendix~\ref{sec:proofs}.

\begin{restatable}{proposition}{measureextension}
\label{prop:measureextension}
    Let $D,d \in \mathbb{N}$ and $( \mu_1, \mu_2, ..., \mu_D ) \in \mathbb{R}^{(2d+1) \times D}$ be such that for each $i = 1, ..., D$, $\mu_{i,0} = 1$, and
    \begin{align*}
        \mathcal{M}_d(\mu_i) &\succeq 0,\\
        \mathcal{M}_{d-1}(\mu_i;  1-x_i^2) &\succeq 0 .
    \end{align*}
    Then, there exists a regular Borel product measure,
    \begin{equation*}
        \mu( \cdot ) := \prod_{i=1}^D  \mu_i(\cdot), 
    \end{equation*}
    supported over $[-1,1]^D$ such that $\mu \left ( [-1,1]^D \right )  = 1$, and
    \begin{equation*}
         \int_{[-1,1]^D} \, x^{\bf n} \, \left (  \prod_{i=1}^D \mu_i(x_i) \right ) \, \mathrm{d} x =  \prod_{i=1}^D \mu_{i,{\bf n}_i}
    \end{equation*}
    for all multi-index ${\bf n} \in \mathbb{N}^D$ such that $0 \leq {\bf n}_i  \leq d$ for all $i$.
\end{restatable}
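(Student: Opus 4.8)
The plan is to reduce the $D$-dimensional product statement to $D$ independent one-dimensional truncated moment problems on $[-1,1]$, and then to solve each of these by combining a gap-free interval Positivstellensatz (Markov--Lukács) with a finite-dimensional duality (bipolar) argument. The product structure itself carries no real content: if for each $i$ I can produce a probability measure $\mu_i(\cdot)$ supported on $[-1,1]$ whose moments reproduce $\mu_{i,{\bf n}_i}$ for $0 \le {\bf n}_i \le d$, then $\mu(\cdot)=\prod_{i=1}^D \mu_i(\cdot)$ is a regular Borel measure on $[-1,1]^D$ of total mass $\prod_i \mu_{i,0}=1$, and by Tonelli its moments factor as $\int x^{\bf n}\,\mathrm{d}\mu = \prod_{i=1}^D \int x_i^{{\bf n}_i}\,\mathrm{d}\mu_i = \prod_{i=1}^D \mu_{i,{\bf n}_i}$ on the required index range. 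Hence everything reduces to the scalar claim for a fixed $i$; I write $s_k := \mu_{i,k}$ to lighten notation.

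The core is therefore the one-dimensional problem: given $s_0=1,s_1,\dots,s_{2d}$ with $\mathcal{M}_d(\mu_i)\succeq 0$ and $\mathcal{M}_{d-1}(\mu_i;1-x_i^2)\succeq 0$, exhibit a positive measure on $[-1,1]$ with these moments. I would introduce the Riesz functional $L\colon \mathbb{P}_{1,2d}\to\mathbb{R}$ defined on monomials by $L(x^k)=s_k$. Reading the two semidefinite hypotheses through $L$ gives exactly $L(\sigma_0)\ge 0$ for every sum of squares $\sigma_0$ of degree $\le 2d$ and $L((1-x^2)\sigma_1)\ge 0$ for every sum of squares $\sigma_1$ of degree $\le 2d-2$, since $L(a^2)=a^{T}\mathcal{M}_d(\mu_i)\,a$ and $L((1-x^2)b^2)=b^{T}\mathcal{M}_{d-1}(\mu_i;1-x_i^2)\,b$.

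The key step, and the one I expect to be the crux, is to upgrade this to nonnegativity of $L$ on the entire cone of polynomials in $\mathbb{P}_{1,2d}$ that are nonnegative on $[-1,1]$. This is where the one-dimensional theory is decisive and where the efficiency of the whole framework originates: by the Markov--Lukács theorem, every $p\in\mathbb{P}_{1,2d}$ with $p\ge 0$ on $[-1,1]$ admits the \emph{exact, gap-free} representation $p=\sigma_0+(1-x^2)\sigma_1$ with $\sigma_0,\sigma_1$ sums of squares of degrees $\le 2d$ and $\le 2d-2$ respectively. Applying $L$ termwise then yields $L(p)\ge 0$ for all such $p$. I would stress that, unlike the higher-dimensional Positivstellensätze underlying Lasserre's hierarchy, this representation incurs no increase in degree, so no moments beyond $s_0,\dots,s_{2d}$ are ever needed.

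Finally I would convert positivity of $L$ into an actual measure by a closed-cone argument in $\mathbb{R}^{2d+1}$. Consider the moment curve $\gamma(x)=(1,x,\dots,x^{2d})$, $x\in[-1,1]$; the set $\gamma([-1,1])$ is compact and its convex hull avoids the origin (every point has zeroth entry equal to $1$), so the moment cone $\mathcal{C}=\mathrm{cone}\bigl(\gamma([-1,1])\bigr)$ is a closed convex cone. Its dual cone is precisely the coefficient vectors of polynomials nonnegative on $[-1,1]$, and the previous paragraph states exactly that $(s_0,\dots,s_{2d})$ lies in the dual of that dual cone; by the bipolar theorem together with closedness, $(s_0,\dots,s_{2d})\in\mathcal{C}$. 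Thus there is a positive (indeed finitely atomic, by Richter--Tchakaloff) measure on $[-1,1]$ representing $L$, and since $L(1)=s_0=1$ it is a probability measure. This settles the scalar case and, via the first paragraph, the proposition. The reduction and the duality step are routine; the only genuine obstacle is the exact interval representation of the third paragraph.
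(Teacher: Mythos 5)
Your proof is correct, and its core coincides with the paper's: both reduce the claim to $D$ independent one--dimensional truncated moment problems on $[-1,1]$, and both use the interval representation $p=\sigma_0+(1-x^2)\sigma_1$ (the paper's Theorem~\ref{thm:fekete}, your Markov--Luk\'acs step) to convert the two semidefiniteness hypotheses into nonnegativity of the Riesz functional on polynomials nonnegative on $[-1,1]$. Where you genuinely diverge is the passage from a positive functional to a representing measure. The paper stays infinite--dimensional: it extends the functional from $\mathbb{P}_{1,d}$ to all of $\mathbb{P}_{1,\infty}$ by the Riesz extension theorem, then to $C([-1,1])$ via Stone--Weierstrass and the B.L.T. theorem, and finally invokes Riesz--Markov before assembling the product measure. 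You instead close the argument inside $\mathbb{R}^{2d+1}$: the moment cone over the compact moment curve is closed because every generator has zeroth coordinate $1$, its dual cone is exactly the coefficient cone of polynomials of degree at most $2d$ nonnegative on $[-1,1]$, and the bipolar theorem places $(s_0,\dots,s_{2d})$ in the moment cone, i.e., exhibits a finitely atomic representing probability measure. Your route is more elementary (no Hahn--Banach--type extension, no completion argument) and yields strictly more information --- atomicity of the representing measure, consistent with the Gaussian-quadrature viewpoint the paper only uses later in its appendix lemma --- while the paper's route is the textbook functional-analytic one and avoids any discussion of the closedness of the moment cone. One detail you should make explicit: the classical Luk\'acs theorem gives the form $\sigma_0+(1-x^2)\sigma_1$ with the tight degree bounds only for even exact degree; for odd exact degree it gives $(1+x)f^2+(1-x)g^2$, which must be converted via $(1\pm x)h^2=\frac{1}{2}\left((1\pm x)h\right)^2+\frac{1}{2}(1-x^2)h^2$ to land in your cone with $\deg\sigma_0\le 2d$ and $\deg\sigma_1\le 2d-2$. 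With that one line added, the argument is complete.
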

Proposition~\ref{prop:measureextension} specifically states that the feasible set of Problem~\ref{eq:reformulationcontinuous} corresponds to the set of \emph{normalized convex combination of $L$ product measures} supported over the hypercube $[-1,1]^D$. This, in turn, implies the following results in Theorem~\ref{thm:equivalence} and Theorem~\ref{thm:globalmin} (proofs provided in Appendix~\ref{sec:proofs}).
\begin{restatable}{theorem}{equivalence}
    \label{thm:equivalence}
    Problem \ref{eq:problemcontinuous} and Problem \ref{eq:reformulationcontinuous} share the same global minimum value.
\end{restatable}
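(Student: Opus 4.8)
The plan is to establish two matching inequalities between the optimal values of~\eqref{eq:problemcontinuous} and~\eqref{eq:reformulationcontinuous} by passing through an intermediate characterization of the reformulated objective as a linear functional of a probability measure on the hypercube. First I would invoke Proposition~\ref{prop:measureextension}: for any feasible point $\mu = \{(\mu^{(l)}_1,\dots,\mu^{(l)}_D)\}_{l=1}^L$ of~\eqref{eq:reformulationcontinuous}, each factor $\mu^{(l)}_i$ (after normalizing its zeroth moment) is the moment vector of a positive $1$D measure on $[-1,1]$, so the product $\prod_{i=1}^D \mu^{(l)}_{i,{\bf n}_i}$ equals $\int_{[-1,1]^D} x^{\bf n}\,\mathrm{d}\mu^{(l)}(x)$ for the associated product measure $\mu^{(l)}(\cdot)=\prod_i \mu^{(l)}_i(\cdot)$. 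Summing over $l$ and using that $p \in \mathbb{P}_{D,d}$ has $\mathrm{supp}(p)$ contained in $\{{\bf n} : {\bf n}_i \le d \ \forall i\}$ (so every moment appearing in the objective is covered by the proposition), the objective becomes $\int_{[-1,1]^D} p(x)\,\mathrm{d}\nu(x)$, where $\nu := \sum_{l=1}^L \mu^{(l)}$. The normalization constraint $\phi_{(0,\dots,0)}(\mu)=1$ says exactly that $\nu([-1,1]^D)=1$, so $\nu$ is a probability measure supported on the hypercube.

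With this identity in hand, the lower bound is immediate: for any probability measure $\nu$ on $[-1,1]^D$ and $p^\star := \min_{[-1,1]^D} p$, one has $\int p\,\mathrm{d}\nu \ge \int p^\star\,\mathrm{d}\nu = p^\star$, so the optimal value of~\eqref{eq:reformulationcontinuous} is at least $p^\star$, the optimal value of~\eqref{eq:problemcontinuous}. For the reverse inequality I would exhibit an explicit feasible point achieving $p^\star$. Since $p$ is continuous and $[-1,1]^D$ is compact, there exists a minimizer $x^\star \in [-1,1]^D$ with $p(x^\star)=p^\star$; take the single product measure determined by the $1$D moment vectors $\mu^{(1)}_{i,n} = (x^\star_i)^n$ (the moments of the Dirac $\delta_{x^\star_i}$) and set the remaining $L-1$ components to the zero vector. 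Its objective value is $p(x^\star)=p^\star$, which forces the optimal value of~\eqref{eq:reformulationcontinuous} to be at most $p^\star$. Combining the two inequalities yields equality.

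The step requiring the most care is verifying that this Dirac construction is genuinely feasible, i.e.\ that its moment matrices are positive semi-definite. Writing the Vandermonde vector $v = (1, x^\star_i, \dots, (x^\star_i)^d)^\top$, one checks $\mathcal{M}_d(\mu^{(1)}_i) = vv^\top \succeq 0$ and, with the analogous vector $\tilde v = (1, x^\star_i, \dots, (x^\star_i)^{d-1})^\top$, $\mathcal{M}_{d-1}(\mu^{(1)}_i;\,1-x_i^2) = (1-(x^\star_i)^2)\,\tilde v\tilde v^\top \succeq 0$, the latter being nonnegative precisely because $x^\star_i \in [-1,1]$ --- which is where the hypercube constraint enters. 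The other delicate point is the bookkeeping around normalization in the forward translation: the factors $\mu^{(l)}_i$ need not individually have unit zeroth moment, so I would first argue that positive semi-definiteness of $\mathcal{M}_d(\mu^{(l)}_i)$ forces either $\mu^{(l)}_{i,0}>0$ (permitting a rescaling before applying Proposition~\ref{prop:measureextension}) or $\mu^{(l)}_{i,0}=0$, in which case the matrix is identically zero and the factor is the zero measure. Handling the zero-mass case cleanly, and confirming the two directions of the measure/moment correspondence used in the lower and upper bounds respectively, is the only real subtlety; the remaining estimates are routine.
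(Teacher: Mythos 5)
Your proof is correct and follows essentially the same route as the paper's: the lower bound comes from interpreting any feasible point, via Proposition~\ref{prop:measureextension}, as a probability measure on $[-1,1]^D$ so that the objective is $\int p\,\mathrm{d}\nu \ge \min_{[-1,1]^D} p$, and the upper bound comes from the feasible point built from the Dirac moment vectors $(x^\star_i)^{n}$ with the remaining components set to zero. If anything, you are more explicit than the paper about two points it glosses over --- the rescaling needed because individual factors need not satisfy $\mu^{(l)}_{i,0}=1$ before Proposition~\ref{prop:measureextension} applies, and the direct verification that the Dirac point's moment and localizing matrices are positive semi-definite --- both of which are handled correctly.
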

\begin{restatable}{theorem}{globalmin}
    \label{thm:globalmin}
    Let $L=2$, and let
    $$ \mu := \left \{  (\mu_1^{(l)}, ..., \mu_D^{(l)})  \right \}_{l=1}^{2} \in \mathbb{R}^{(2d+1) \times D \times 2 } $$ 
    be a feasible point of Problem~\ref{eq:reformulationcontinuous}. This point corresponds to a global minimum if and only if for any $l \in \{0, 1\}$ and $\alpha^{(l)} := \prod_{i=1}^D \mu_{i,0}^{(l)} \not = 0$,
    the point,
    \begin{eqnarray}
        \left \{  \left ( \frac{\mu_1^{(l)}}{\alpha^{(l)}}, \mu_2^{(l)}, ..., \mu_D^{(l)} \right ) ,  (0,...,0)  \right \}
    \end{eqnarray}
    is a local minimum.
\end{restatable}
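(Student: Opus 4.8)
The plan is to exploit the measure-theoretic reading of the feasible set furnished by Proposition~\ref{prop:measureextension}. Every feasible point $\mu=\{\mu^{(1)},\mu^{(2)}\}$ encodes a pair of product (sub-)measures on $[-1,1]^D$ with masses $\alpha^{(1)},\alpha^{(2)}\ge 0$ satisfying $\alpha^{(1)}+\alpha^{(2)}=1$, whose normalized versions $\tilde\mu^{(l)}$ are genuine probability measures on the hypercube. The objective then splits as $\sum_{l:\,\alpha^{(l)}\neq 0}\alpha^{(l)}v^{(l)}$ with $v^{(l)}:=\int_{[-1,1]^D}p\,\mathrm{d}\tilde\mu^{(l)}$. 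Writing $p^\star:=\min_{[-1,1]^D}p$, I first record two elementary facts: (i) since $\tilde\mu^{(l)}$ is a probability measure and $p\ge p^\star$ pointwise on the cube, $v^{(l)}\ge p^\star$ for every $l$, and by Theorem~\ref{thm:equivalence} the global minimum value of Problem~\ref{eq:reformulationcontinuous} equals $p^\star$; (ii) if $\alpha^{(l)}=0$ then some factor has $\mu^{(l)}_{i_0,0}=0$, and positive-semidefiniteness of $\mathcal{M}_d(\mu^{(l)}_{i_0})$ (a PSD matrix with a vanishing diagonal entry has a vanishing row) forces $\mu^{(l)}_{i_0,n}=0$ for all $n\le d$, so that $\phi_{\bf n}(\mu^{(l)})=0$ on $\mathrm{supp}(p)$ and the measure contributes nothing. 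Fact (ii) justifies restricting every sum to the indices with $\alpha^{(l)}\neq 0$.

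For the ``only if'' direction, suppose $\mu$ is a global minimum. Then $p^\star=\sum_{l:\,\alpha^{(l)}\neq 0}\alpha^{(l)}v^{(l)}$ is a convex combination of terms each $\ge p^\star$ with weights summing to $1$, so equality forces $v^{(l)}=p^\star$ for every $l$ with $\alpha^{(l)}>0$. The candidate $\{\tilde\mu^{(l)},(0,\dots,0)\}$ is feasible with objective $1\cdot v^{(l)}+0=p^\star$; attaining the global minimum value, it is a global minimizer and hence a fortiori a local one. This direction is bookkeeping once the convex-combination identity and facts (i)--(ii) are in place.

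The heart of the argument is the ``if'' direction, driven by a first-order perturbation. Fix $l$ with $\alpha^{(l)}\neq 0$ and assume $\{\tilde\mu^{(l)},(0,\dots,0)\}$ is a local minimum; let $x^\star\in[-1,1]^D$ realize $p^\star$. I consider the feasible path that transfers mass $\epsilon$ from $\tilde\mu^{(l)}$ to a Dirac mass at $x^\star$: scale one factor of the first measure by $(1-\epsilon)$, and set the $i$-th factor of the second measure to have moments $\epsilon^{1/D}(x^\star_i)^{n_i}$, so its total mass is $\epsilon$ and its moment products equal $\epsilon\,(x^\star)^{\bf n}$. Each such Dirac moment matrix is rank-one PSD, the localizing entries equal $\epsilon^{1/D}(1-(x^\star_i)^2)(x^\star_i)^{m+n}$ and stay PSD because $1-(x^\star_i)^2\ge 0$, and the mass constraint remains $(1-\epsilon)+\epsilon=1$; as $\epsilon\to 0^+$ the point stays in an arbitrarily small neighborhood of the base point. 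Its objective is $(1-\epsilon)v^{(l)}+\epsilon\,p^\star$, whose one-sided derivative at $\epsilon=0$ is $p^\star-v^{(l)}$; local minimality forces $p^\star-v^{(l)}\ge 0$, and with $v^{(l)}\ge p^\star$ this gives $v^{(l)}=p^\star$. Applying this to each $l$ with $\alpha^{(l)}\neq 0$ and summing yields objective $\sum_l\alpha^{(l)}v^{(l)}=p^\star$, so $\mu$ is a global minimum.

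I expect the main obstacle to be technical rather than conceptual: confirming that the mass-transfer is realizable as a genuine curve in the moment coordinates $\mathbb{R}^{(2d+1)\times D\times 2}$ that is simultaneously feasible and \emph{local}. The $\epsilon^{1/D}$ scaling is essential here, since it spreads the transferred mass across all $D$ one-dimensional factors and forces every second-measure moment to tend to $0$ continuously (placing the mass in a single factor would leave the remaining factors bounded away from the zero measure and break locality). Care is therefore needed to verify that both $\mathcal{M}_d(\cdot)\succeq 0$ and the localizing constraint $\mathcal{M}_{d-1}(\cdot\,;1-x_i^2)\succeq 0$ survive this scaling, and that the one-sided directional derivative computation is legitimate at a boundary point of the semidefinite feasible cone.
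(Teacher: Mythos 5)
Your proof is correct, but it takes a genuinely different route from the paper's. The paper builds an explicit measure-space apparatus: a continuous surjection $\zeta:\mathcal{S}_2\to\mathcal{F}_2$ (Proposition~\ref{prop:eqsets}), a transfer principle for global and local minima between the measure problem and the moment problem (Corollary~\ref{cor:equivproblem}), and two families of descent paths in $\mathcal{S}_2$ (Proposition~\ref{prop:descentmeasure}) --- one moving mass onto $\delta_{x^\star}$ when a single component carries all the mass, and a second transferring mass between the two components when their normalized values differ --- which are then assembled in Corollary~\ref{cor:descentmeasure} and pulled back through $\zeta$ and $\phi$. You collapse almost all of this. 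Your ``only if'' direction replaces the Case-2 descent path by the elementary averaging observation that $\sum_l\alpha^{(l)}v^{(l)}=p^\star$ with nonnegative $\alpha^{(l)}$ summing to $1$ and each $v^{(l)}\ge p^\star$ forces $v^{(l)}=p^\star$; your ``if'' direction keeps only the Case-1 perturbation and realizes it directly as a curve in the finite-dimensional moment coordinates (with the same $\epsilon^{1/D}$ mass-spreading the paper uses, which you correctly identify as essential for the curve to remain local), thereby sidestepping the paper's reliance on a topology on $\mathcal{S}_2$ and on continuity and surjectivity of $\zeta$. What the paper's heavier route buys is the measure-space picture and the Case-2 descent direction, which is the algorithmically relevant fact exploited in Section~\ref{sec:reformulation_theory}; what your route buys is a shorter, self-contained argument living entirely in $\mathbb{R}^{(2d+1)\times D\times 2}$. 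One point of divergence worth flagging: you read ``for any $l$ with $\alpha^{(l)}\neq 0$'' universally, so your ``if'' direction requires \emph{every} nonvanishing component to yield a local minimum, whereas the paper's converse assumes this for only \emph{one} such $l$ (and in doing so only directly establishes global minimality of the normalized single-component point rather than of the original two-component point $\mu$); your universal reading is the literal one, and your argument is complete and internally consistent under it.
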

Theorem~\ref{thm:equivalence} implies it is possible to solve Problem~\ref{eq:problemcontinuous} by solving Problem~\ref{eq:reformulationcontinuous}, while Theorem~\ref{thm:globalmin} implies that global optimality can be established using local information only. Indeed, by leveraging Theorem~\ref{thm:globalmin}, a descent algorithm can be devised for computing a global minimum of Problem~\ref{eq:reformulationcontinuous}~\footnote{Complete algorithmic details, pseudocode, numerical implementation details, and further computational considerations are presented in Appendix~\ref{sec:algorithm}}; following the computation of a local minimum, at least one of $\alpha^{(0)}$ or $\alpha^{(1)}$ must be nonzero by construction.  Depending on the nonzero status of $\alpha^{(i)}$, $i\in\{0,1\}$, one may substitute the corresponding point specified in Theorem~\ref{thm:globalmin} and then verify the following conditions:
    \begin{enumerate}
    \item {\bf If no descent path starting from this point exists}, the new point is a local minimum, and we conclude that the original point is a global minimum following Theorem~\ref{thm:globalmin}.
    \item {\bf If a descent path starting from this point exists}, another local minimum with strictly lower value will be computed, and the above process is repeated until converging to a global minimum.
    \end{enumerate}

Even though Theorem~\ref{thm:globalmin} is stated using a combination of only two ($L=2$) product measures, for any $L \geq 2$ the conclusion still holds; the choice of $L=2$ is made for the purpose of simplifying the analysis. In practice, it is common to set $2 \leq L \leq 20$ (see Section~\ref{sec:numerical_results}); moreover, adding more product measures increases the number of degrees of freedom and often improves numerical convergence.

\section{Numerical Results}
\label{sec:numerical_results}

This section presents results demonstrating the correctness and performance of our framework for optimizing polynomials over the hypercube $[-1,1]^D$. For this purpose, we have implemented a descent-based numerical algorithm (described in Appendix~\ref{sec:algorithm}) capable of computing solutions to our proposed reformulation (Problem \ref{eq:reformulationcontinuous}). We emphasize, however, that the choices made in the design and implementation of the algorithm are by no means necessary; indeed, any alternative algorithm capable of solving Problem~\ref{eq:reformulationcontinuous} numerically can be chosen.

Our implementation has been employed in the computation of the global minima of highly non-convex functions with known global optimal value and location as seen in Figures~\ref{fig:sparse} and \ref{fig:dense}. The functions under consideration possess characteristics that make global optimization intractable for traditional techniques; namely, these functions possess non-convexity, exponentially many local minima and exponentially small (in relative volume) basin of attraction as a function of dimension $D$. 

Our numerical experiments were carried out on a Dell 1U server with 2x AMD EPYC 7452 32-core (128 system threads) and 256 GB DRAM.  Experiments were further limited to the use of a \emph{single (1) core}. Timing represents wall-time for solution computation and extraction, excluding I/O and problem setup (i.e., pre-computations). Relative error between a computed value ${\bf x}_{comp}$ and a known value ${\bf x}_{exact}$ refers to the quantity,
\begin{equation}
    \frac{\lvert \lvert {\bf x}_{comp} - {\bf x}_{exact} \rvert \rvert_2 }{ 10^{-15} + \lvert \lvert {\bf x}_{exact} \rvert \rvert_2 }
\end{equation}
where $\lvert \lvert \cdot \rvert \rvert_2$ is the $\ell^2$-norm.

\subsection{Numerical Example 1}
\label{sec:numerical_example_1}
The first example is derived from a class of problems that consists of objective polynomials of the form,\footnote{\footnotesize For this section and the following, $T_n(x)$ represents a Chebyshev polynomial of degree $n$. More details can be found in Section \ref{subsec:polybasis}, Appendix~\ref{sec:algorithm}}
\begin{align*}
    f_D (x) &= \frac{1}{D} \, \sum_{i=1}^D T_2(x_i) - \prod_{i=1}^D T_8(x_i) 
    =  \frac{1}{D} \, \sum_{i=1}^D (2 x_i^2-1)  - \prod_{i=1}^D \cos(8 \, \cos^{-1}(x_i)).
\end{align*}
Figure \ref{fig:sparse} depicts $f_2(\cdot)$ in two dimensions (i.e., $D=2$). 

\begin{figure}[tpbh!]
\begin{subfigure}{.5\textwidth}
  \includegraphics[width=0.85\linewidth]{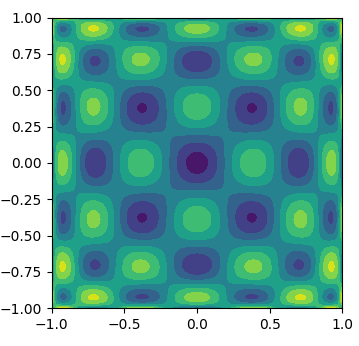}
  \caption{Top view}
  \label{fig:sub1.1}
\end{subfigure}%
\begin{subfigure}{.5\textwidth}
  \includegraphics[width=1.0\linewidth]{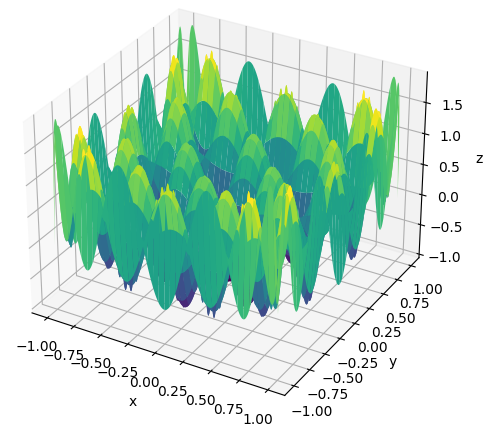}
  \caption{3-D View}
  \label{fig:sub1.2}
\end{subfigure}
\caption{\footnotesize Two-dimensional (2-D) rendering of the  non-convex function $f_2(x)$ in Example~\ref{sec:numerical_example_1}.  For the exponential number of local minima, global optimization of this function would be intractable using traditional techniques.}
\label{fig:sparse}
\end{figure}

For every fixed dimension $D$, the function $f_D (x)$ possess a unique global minimum at the origin (i.e., ${\bf 0} = (0,0,...,0)$) with value equal to
$$f_D^* = f_D({\bf 0}) = -2.0.$$
Each function exhibits more than $3^D$ local minima, and the basin of attraction of the global minimum has volume smaller than $1/5^D$. Together, these characteristics imply that the use of any descent technique is bound to fail. Indeed, any uniformly initialized descent technique will succeed in finding the global minimum with probability 
\emph{at most} $1/5^D$. In dimension $D=10$, this corresponds to a probability of approximately one in $10^{7}$, while in dimension $D=100$, this probability corresponds to approximately one in $10^{70}$, making the problem intractable for descent techniques in general.

Numerical experiments for this example were performed using the algorithm described in Appendix~\ref{sec:algorithm}, together with the parameters shown in Table~\ref{tab:param_family_1}. For this choice of parameters, a relative error smaller than $10^{-2}$ is expected. The relative error for the optimal value and the optimal location, as well as the time required before reaching the stopping criterion, are shown in Figures~\ref{fig:family_1_error} and~\ref{fig:family_1_timing}, respectively for dimension $D$ from $1$ to $250$.

\begin{table}[htbp!]
\small 
    \centering
    \caption{\footnotesize Parameters used for performing numerical experiments related to Example~\ref{sec:numerical_example_1}.}
    \begin{tabular}{|c|c|c|} \hline
        \multirow{4}{*}{Formulation} & Size of moment matrices ($(d+1) \times (d+1)$) & $9$  \\ 
            & Penalty parameter $(\gamma)$  &  $8.0$ \\
            & Burer-Monteiro rank & $9$ (full) \\
            & Number product measures ($L$) & $4$ \\ \hline
        \multirow{3}{*}{Solver (KKT)} & Stopping criterion (relative gradient) & $10^{-2}$ \\
            & Stopping criterion (absolute value) & $10^{-4}$ \\
            & Stopping criterion (absolute feasibility) & $10^{-2}$ \\ \hline
        \multirow{4}{*}{Solver (Primal/L-BFGS)} & Approximation order  &  $100$ \\
                & Wolfe Beta factor ($\beta$) & $0.3$ \\
                & Stopping criterion (relative gradient) & $ 10^{-3} $ \\ 
                & Stopping criterion (absolute value) & $10^{-4}$ \\\hline
    \end{tabular}
    \label{tab:param_family_1}
\end{table}

Figure~\ref{fig:family_1_error} shows that our algorithm succeeds in identifying both the global minimum value and the global optimal location within the expected level of accuracy in every problem instance up to $250$ dimensions.

\begin{figure}[htbp!]
    \centering
    \includegraphics[width=4in]{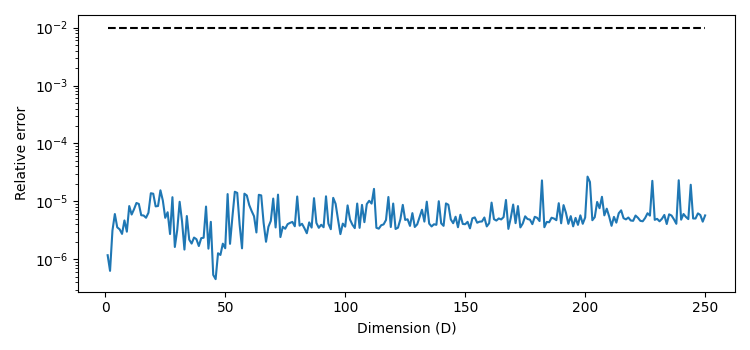}
    \includegraphics[width=4in]{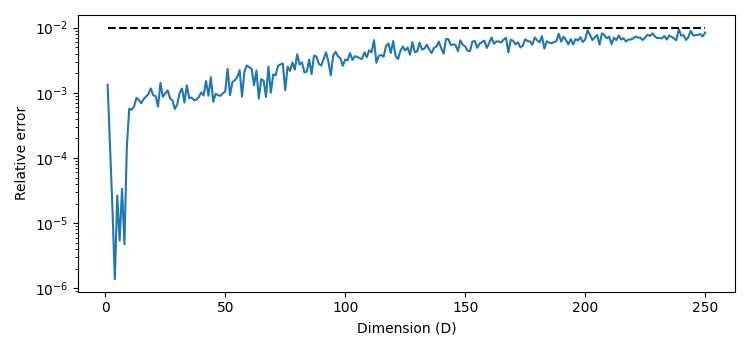}
    \caption{\footnotesize  Relative error vs problem dimension ($D$) for dimension $1$ to $250$ for Example~\ref{sec:numerical_example_1}. {\emph Top}: relative error associated with computed optimal value, {\emph Bottom}: relative error associated with computed optimal location. The error lies below the expected upper bound (black dotted line) of $10^{-2}$ in every case.}
    \label{fig:family_1_error}
\end{figure}

As shown in Figure \ref{fig:family_1_timing}. our algorithm further achieves the desired accuracy in a computationally-efficient manner, exhibiting a highly-advantageous quadratic scaling of the time-to-solution as a function of dimension (i.e. $O(D^2)$ scaling). This is in line with expectations since the support of $f_D$ has $N(f_D) = D+1$ elements, implying that the most expensive component of the algorithm (the evaluation of the objective) carries a cost on the order of $O(LD^2)$ (see Table \ref{tab:cost}).

\begin{figure}[htbp!]
    \centering
    \includegraphics[width=4in]{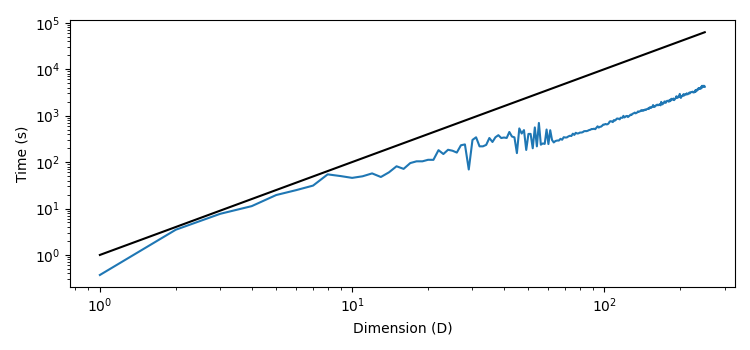}
    \caption{\footnotesize Computation (wall) time (s) vs problem dimension ($D$) for Example~\ref{sec:numerical_example_1}. Our numerical implementation exhibits polynomial (quadratic; (black line)) scaling on problems previously considered intractable.}
    \label{fig:family_1_timing}
\end{figure}

\subsection{Numerical Example 2}
\label{sec:numerical_example_2}
The second numerical example comes from a class of functions of the form,    
\begin{align*}
    g_D (x) &= \frac{1}{D} \, \sum_{i=1}^D T_4(x_i) + \left ( \frac{1}{D} \, \sum_{i=1}^D  T_1(x_i)\right )^3 \\
    &= \frac{1}{D} \, \sum_{i=1}^D \left ( 8x_i^4 - 8x_i^2 + 1 \right ) + \left ( \frac{1}{D} \, \sum_{i=1}^D  x_i \right )^3.
\end{align*}
\begin{figure}[htbp!]
\begin{subfigure}{.5\textwidth}
  \includegraphics[width=0.85\linewidth]{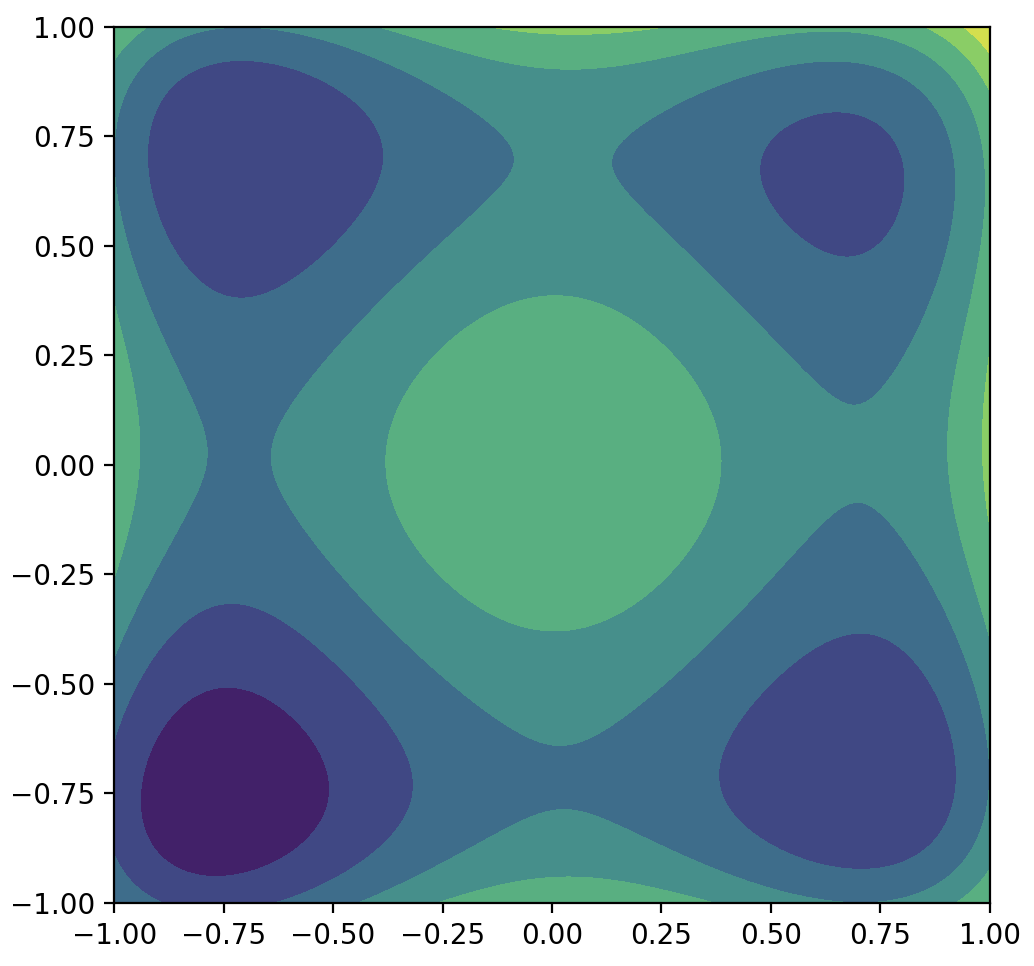}
  \caption{\small Top view}
  \label{fig:sub2.1}
\end{subfigure}%
\begin{subfigure}{.4\textwidth}
  \includegraphics[width=1.25\linewidth]{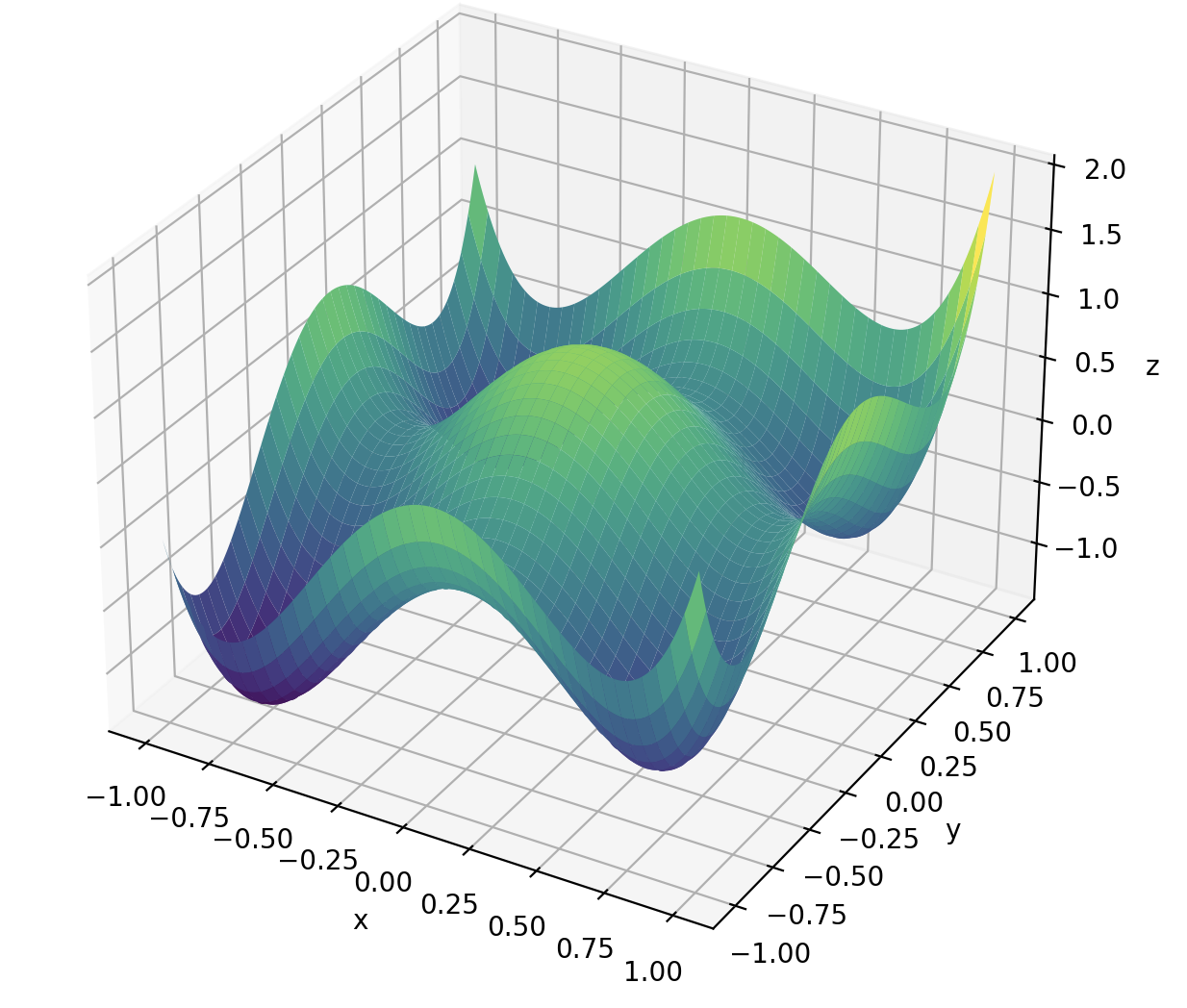}
  \caption{\small 3-D View}
  \label{fig:sub2.2}
\end{subfigure}
\caption{\footnotesize Two-dimensional (2-D) rendering of the  non-convex function $g_2(x)$ from Example~\ref{sec:numerical_example_2}. For an exponential number of local minima, as well as the asymmetry in this function, global optimization is intractable using traditional techniques.}
\label{fig:dense}
\end{figure}
As with Example~\ref{sec:numerical_example_1}, the functions of Example~\ref{sec:numerical_example_2} are also highly non-convex and have exponentially many local minima ($2^D$ in dimension $D$). Every function possesses a unique minimum with value approximately equal to $-1.3911$ located near $(-0.75553, ..., -0.75553)$; i.e.,
\begin{equation}
    g^*_D := g_D(-0.75553,..., -0.75553) \approx -1.3911.
\end{equation}

\begin{table}[htbp!]
    \centering
    \small
    \caption{\footnotesize Parameters used for performing numerical experiments related to Numerical Example~\ref{sec:numerical_example_2}}
    \begin{tabular}{|c|c|c|} \hline
        \multirow{4}{*}{Formulation} & Size of moment matrices ($(d+1) \times (d+1)$) & $5$  \\ 
            & Penalty parameter $(\gamma)$  &  $10.0$ \\
            & Burer-Monteiro rank & $5$ (full) \\
            & Number product measures ($L$) & $6$ \\ \hline
        \multirow{3}{*}{Solver (KKT)} & Stopping criterion (relative gradient) & $10^{-2}$ \\
            & Stopping criterion (absolute value) & $10^{-3}$ \\
            & Stopping criterion (absolute feasibility) & $10^{-2}$ \\ \hline
        \multirow{4}{*}{Solver (Primal/L-BFGS)} & Approximation order  &  $40$ \\
                & Wolfe Beta factor ($\beta$) & $0.4$ \\
                & Stopping criterion (relative gradient) & $ 10^{-2} $ \\ 
                & Stopping criterion (absolute value) & $10^{-5}$ \\\hline
    \end{tabular}
    \label{tab:param_family_2}
\end{table}

Figure~\ref{fig:family_2_error} shows how our method  accurately (i.e., within $10^{-2}$) computes the relative error in the optimal value and location for Example~\ref{sec:numerical_example_2}.  Figure~\ref{fig:family_2_timing} exhibits a computation time scaling of $O(D^4)$. As the driving computational component (the objective evaluation) carries a cost of $O(D^3)$ (resulting in a total cost of $O(D^4)$ according to Table~\ref{tab:cost}), the results are in line with expectations.

\begin{figure}[htbp!]
    \centering
    \includegraphics[width=4in]{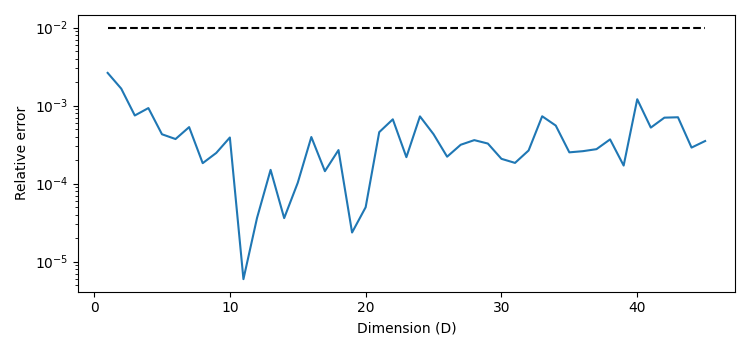}
    \includegraphics[width=4in]{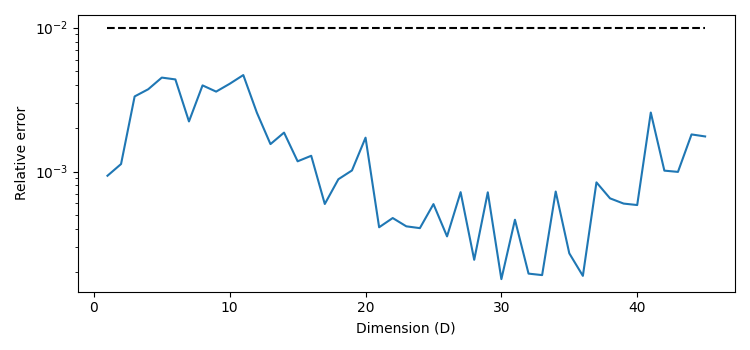}
    \caption{\footnotesize Relative error vs problem dimension ($D$) for dimension $1$ to $45$ for Example~\ref{sec:numerical_example_2}. {\emph Top}: optimal value; {\emph Bottom}: optimal location. The error lies below the expected upper bound (dotted black line) of $10^{-2}$ in every case.}
    \label{fig:family_2_error}
\end{figure}
\begin{figure}[htbp!]
    \centering
    \includegraphics[width=4in]{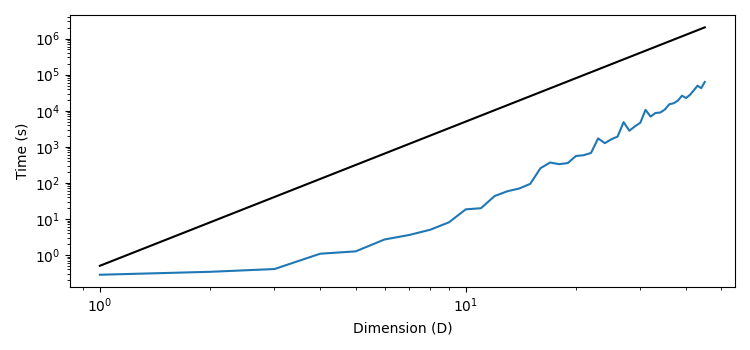}
    \caption{\footnotesize Computation (wall) time (s) vs problem dimension ($D$) for Example~\ref{sec:numerical_example_2}. Our implementation exhibits a polynomial (quartic, degree-$4$; black line) scaling on problems previously considered intractable.}
    \label{fig:family_2_timing}
\end{figure}

\section{Conclusion}
\label{sec:conclusion}
In this paper, we have introduced a new framework for efficiently reformulating general polynomial optimization problems over the hypercube $[-1,1]^D$ as non-linear problems with no spurious local minima. We have rigorously demonstrated the correctness of our approach, and have further demonstrated the correctness, performance, and practicality of our implementation using polynomial optimization problems, which are intractable by standard techniques.

From a theoretical standpoint, future work will target the treatment of more complex polynomial (e.g., semi-algebraic) constraints. Performance improvements, both algorithmic and from a software optimization perspective (including parallelism), will also take a prominent role in the near future.


\appendix

\section{Proofs}
\label{sec:proofs}

\begin{theorem}[Powers, Reznick, from Fekete~\cite{powers2000polynomials}] 
\label{thm:fekete}
Let $p \in \mathbb{P}_{1,d}$ be a 1D polynomial of degree $d$ non-negative on $[-1,1]$. Then,
\begin{equation}
	p(x) = (f(x))^2 + (1-x^2) \, (g(x))^2
\end{equation}
for some polynomials $f(x)$ and $g(x)$ of degree at most $d$ and $d-1$ respectively.
\end{theorem}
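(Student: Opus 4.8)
The plan is to prove the statement by exhibiting $p$ as a product of polynomials each already of the desired shape, and then collapsing the product into a single representation $f^2 + (1-x^2)g^2$ via a multiplicativity identity. Write $s(x) := 1 - x^2$ and let $\Sigma$ denote the set of polynomials expressible as $f^2 + s\,g^2$. The cornerstone is the Brahmagupta--Fibonacci-type identity
\begin{equation}
  \bigl(f_1^2 + s\,g_1^2\bigr)\bigl(f_2^2 + s\,g_2^2\bigr) = \bigl(f_1 f_2 + s\,g_1 g_2\bigr)^2 + s\,\bigl(f_1 g_2 - f_2 g_1\bigr)^2,
\end{equation}
which a direct expansion of both sides verifies, and which shows that $\Sigma$ is closed under multiplication. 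Consequently it suffices to factor $p$ into finitely many factors, each lying in $\Sigma$, together with a nonnegative constant.

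First I would classify the roots of $p$ using the hypothesis $p \geq 0$ on $[-1,1]$. Factoring $p$ over $\mathbb{R}$ into linear and irreducible-quadratic pieces, I would argue that: (i) every root $a \in (-1,1)$ has even multiplicity (otherwise $p$ changes sign inside the interval), so it contributes a perfect square $(x-a)^{2k} = \bigl((x-a)^k\bigr)^2 \in \Sigma$; (ii) each conjugate pair of complex roots yields a quadratic that is positive on all of $\mathbb{R}$; and (iii) the remaining real roots lie in $\mathbb{R} \setminus (-1,1)$, including the endpoints $\pm 1$, and contribute linear factors. The base cases to verify by an explicit completion of squares are that every quadratic positive on all of $\mathbb{R}$ and every linear form nonnegative on $[-1,1]$ belongs to $\Sigma$; for instance $1 \pm x = \frac{1}{2}(1 \pm x)^2 + \frac{1}{2}s$, and a general admissible linear or quadratic form is handled by matching coefficients, whose feasibility reduces precisely to nonnegativity at $x = \pm 1$.

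The delicate step, and the one where the hypothesis is genuinely used, is the sign bookkeeping for the exterior real roots. A factor $(x-\rho)$ with $\rho > 1$ is negative on $[-1,1]$, so I would rewrite it as $-(\rho - x)$ with $\rho - x \in \Sigma$, whereas a factor with $\rho < -1$ is already nonnegative on $[-1,1]$. Collecting the leading coefficient together with the accumulated signs $(-1)^{\#\{\rho > 1\}}$, the fact that $p \geq 0$ on the interior $(-1,1)$ forces this overall constant to be positive, and a positive constant $\lambda = (\sqrt{\lambda})^2 \in \Sigma$. Thus $p$ is a product of elements of $\Sigma$, and iterating the identity gives $p = f^2 + s\,g^2$. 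Finally, tracking degrees through the identity, each multiplication makes the resulting $f$-degree the sum of the factor $f$-degrees while preserving the invariant $\deg g \leq \deg f - 1$; since each factor contributes an $f$-degree at most its own degree, summing yields $\deg f \leq \deg p = d$ and $\deg g \leq d-1$, matching the claim. The main obstacle I anticipate is making the sign and parity argument airtight across all configurations of endpoint and exterior roots while keeping the degree accounting tight.
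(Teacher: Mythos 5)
The paper does not prove this statement at all: Theorem~\ref{thm:fekete} is imported as a black-box classical result, cited to Powers and Reznick (who attribute it to Fekete; it is also known as the Luk\'acs/Fej\'er representation), so there is no internal proof to compare against. Your argument is a correct, self-contained derivation along the classical lines: the Brahmagupta--Fibonacci identity for the form $f^2+s\,g^2$ with $s=1-x^2$ does make $\Sigma$ multiplicatively closed, the real factorization of $p$ splits into interior roots of even multiplicity (perfect squares), positive-definite quadratics, and linear factors from roots in $\mathbb{R}\setminus(-1,1)$ and at $\pm1$, the sign bookkeeping via $p(x_0)>0$ at an interior non-root forces the collected constant to be positive, and your degree invariant ($\deg f_j\leq\deg q_j$, $\deg g_j\leq\deg f_j-1$, both preserved under the product identity) delivers the stated bounds $\deg f\leq d$, $\deg g\leq d-1$. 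Two spots deserve tightening. First, your claim that membership of a linear or quadratic form in $\Sigma$ ``reduces precisely to nonnegativity at $x=\pm1$'' is accurate for linear forms (where $(a+b)^2=\ell(1)$, $(a-b)^2=\ell(-1)$) but not literally for quadratics ($x^2-\tfrac14$ is nonnegative at $\pm1$ yet negative on $(-1,1)$, hence not in $\Sigma$); what you actually need, and what does hold by an explicit coefficient match with $f$ linear and $g$ constant, is that every quadratic positive on all of $\mathbb{R}$ lies in $\Sigma$ --- the relevant discriminant is $(1+a^2+b^2)^2-4a^2\geq(1-a^2)^2\geq0$, so the system is always solvable for positive-definite $(x-a)^2+b^2$. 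Second, the endpoint roots $\rho=\pm1$ can have odd multiplicity and must be folded into the $\rho\geq1$ and $\rho\leq-1$ cases rather than treated as interior; your explicit identities $1\pm x=\tfrac12(1\pm x)^2+\tfrac12(1-x^2)$ already handle them, but the case split should be stated with non-strict inequalities. With those repairs the proof is complete and gives exactly the degree bounds the paper's Proposition~\ref{prop:measureextension} relies on.
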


\begin{theorem}[ \bf Riesz extension theorem,~\cite{rudin1986real}] 
\label{thm:rieszextension}
Let $E$ be a real vector space, $F\subset E$ be a subspace and $K \subset E$ a convex cone. Assume $E = K+F$ and let $\phi : F \rightarrow \mathbb{R}$ be a linear functional over $F$ that is non-negative over $K \cap F$; i.e., $\phi(y) \geq 0 \;\; \forall \;\; y \in K \cap F$.  Then, there exists a linear functional
 $$\psi : E \rightarrow \mathbb{R},$$
such that
\begin{align}
	\psi(x) &= \phi(x) \;\; \forall \;\; x \in F, \quad{ and} \\
	\psi(y) &\geq 0 \;\; \forall \;\; y \in K.
\end{align}	
\end{theorem}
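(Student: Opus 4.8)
The plan is to prove this as a Hahn--Banach--type extension result via transfinite induction organized by Zorn's lemma, concentrating the real content in a single one-dimensional extension step. First I would set up the partially ordered set $\mathcal{P}$ whose elements are pairs $(G,\psi_G)$ consisting of a subspace $G$ with $F\subseteq G\subseteq E$ together with a linear functional $\psi_G:G\to\mathbb{R}$ that restricts to $\phi$ on $F$ and satisfies $\psi_G(y)\ge 0$ for every $y\in K\cap G$. I order $\mathcal{P}$ by extension, declaring $(G_1,\psi_1)\preceq(G_2,\psi_2)$ when $G_1\subseteq G_2$ and $\psi_2$ agrees with $\psi_1$ on $G_1$. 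This set is nonempty since $(F,\phi)\in\mathcal{P}$, and any chain admits an upper bound obtained by taking the union of the domains (a subspace, being an increasing union of subspaces) together with the functional defined by agreement on that union; non-negativity on the cone passes to the union because every element lies in some member of the chain. Zorn's lemma then yields a maximal element $(G_*,\psi_*)$, and it suffices to show $G_*=E$, for then $\psi:=\psi_*$ is the desired functional.

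The crux is the one-dimensional extension step, which I would use to contradict maximality whenever $G_*\ne E$. Fix $z\in E\setminus G_*$ and attempt to define $\psi$ on $G':=G_*\oplus\mathbb{R}z$ by $\psi(g+tz):=\psi_*(g)+tc$ for a constant $c$ to be chosen; linearity and the extension property are then automatic, so everything reduces to selecting $c$ so that $\psi_*(g)+tc\ge 0$ for every $g\in G_*$ and $t\in\mathbb{R}$ with $g+tz\in K$. Splitting on the sign of $t$ and using that $K$ is a cone (so that, for $t>0$, $g+tz\in K$ is equivalent to $(g/t)+z\in K$, and for $t<0$ to $(g/|t|)-z\in K$), this requirement is exactly
\begin{equation*}
A:=\sup\{-\psi_*(u):u\in G_*,\ z+u\in K\}\ \le\ c\ \le\ \inf\{\psi_*(w):w\in G_*,\ w-z\in K\}=:B,
\end{equation*}
while the case $t=0$ is already guaranteed by $\psi_*\ge 0$ on $K\cap G_*$. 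Such a $c$ exists precisely when $A\le B$ with both quantities finite.

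The inequality $A\le B$ is where the convex-cone structure does the work: for any admissible $u,w$ one has $z+u\in K$ and $w-z\in K$, and since a convex cone is closed under addition, $(z+u)+(w-z)=u+w\in K$; as $u+w\in G_*$ as well, the partial-extension property gives $\psi_*(u+w)\ge 0$, i.e.\ $-\psi_*(u)\le\psi_*(w)$, and passing to the supremum and infimum yields $A\le B$. Finiteness (equivalently, that neither defining set is empty) is exactly where I would invoke the hypothesis $E=K+F$: since $F\subseteq G_*$ we have $E=K+G_*$, so writing $z=k_1+g_1$ and $-z=k_2+g_2$ with $k_1,k_2\in K$ and $g_1,g_2\in G_*$ exhibits $u=-g_1$ with $z+u\in K$ and $w=-g_2$ with $w-z\in K$, making both sets nonempty; combined with $A\le B$ this forces $A,B$ finite. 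Choosing any $c\in[A,B]$ then produces a strict extension $(G',\psi)\in\mathcal{P}$, contradicting maximality, so $G_*=E$. I expect the main obstacle to be the bookkeeping of the one-dimensional step---correctly translating $g+tz\in K$ into cone membership after dividing by $t$, and verifying that the two one-sided constraints are compatible---rather than the Zorn's-lemma scaffolding, which is routine.
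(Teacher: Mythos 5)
The paper does not actually prove this statement: it is quoted verbatim as a background result (the M.~Riesz extension theorem) with a citation to Rudin, and is then used as a black box in the proof of Proposition~\ref{prop:measureextension}. There is therefore no in-paper proof to compare yours against; judged on its own, your argument is correct and complete, and it is the standard proof of this theorem. The Zorn's-lemma scaffolding is set up properly: the union of a chain of partial extensions is a subspace, the functional defined by agreement is well defined, and nonnegativity on the cone passes to the union since every element lies in some member of the chain. The one-dimensional step is also handled correctly. The translation of $g+tz\in K$ into $z+u\in K$ (for $t>0$, with $u=g/t$) and into $w-z\in K$ (for $t<0$, with $w=g/\lvert t\rvert$) is valid because $K$ is closed under positive scaling; the compatibility $A\le B$ uses that a convex cone is closed under addition (for $x,y\in K$, $\frac{1}{2}(x+y)\in K$ by convexity and then rescale), combined with nonnegativity of $\psi_*$ on $K\cap G_*$; and the nonemptiness of both index sets---hence finiteness of $A$ and $B$, given $A\le B$---is exactly where the hypothesis $E=K+F\subseteq K+G_*$ enters, which you invoke correctly by decomposing both $z$ and $-z$. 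Any $c\in[A,B]$ then yields a strict extension, contradicting maximality, so $G_*=E$. In short, your proposal supplies precisely the proof that the paper delegates to the literature.
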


\begin{theorem}[\bf Riesz-Markov,\cite{reed1980functional}]\footnote{This statement is adapted from~\cite{reed1980functional}, Theorem IV.14, and the preceding remarks on the one-to-one correspondence of Baire measures and regular Borel measures.}
\label{thm:rieszmarkov}
Let $X$ be a compact Hausdorff space and assume that $\psi : C (X) \rightarrow \mathbb{R}$  is a continuous linear functional over $C(X)$, the space of functions continuous on $X$. Assume further that $\psi(\cdot)$ is non-negative over $C(X)$; i.e., 
\begin{equation}
	\psi(f) \geq 0 \;\;  \forall \;\; f \in C(X) \, : \, f(x) \geq 0 \;\;\forall \;\; x \in X.
\end{equation}		
Then, there exists a unique regular Borel measure $\mu (\cdot)$, such that
\begin{equation}
	\psi(f) = \int_X f(x) \, \mathrm{d} \mu(x).
\end{equation}
\end{theorem}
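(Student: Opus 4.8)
The plan is to construct the measure $\mu(\cdot)$ directly from the functional $\psi$ by an outer-measure approach, and then to separately establish the integral representation, the regularity, and the uniqueness. First I would define, for every open set $U \subseteq X$,
$$\mu(U) := \sup\{\psi(f) : f \in C(X),\ 0 \le f \le 1,\ \mathrm{supp}(f) \subseteq U\},$$
and extend to an arbitrary $E \subseteq X$ by outer regularity,
$$\mu^*(E) := \inf\{\mu(U) : E \subseteq U,\ U \text{ open}\}.$$
Positivity of $\psi$ gives $\psi(f) \le \psi(1)$ whenever $0 \le f \le 1$ (since $\psi(1-f) \ge 0$), so $\mu^*(X) \le \psi(1) < \infty$ and the construction is finite and well-defined.

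The second step is to show $\mu^*$ is a Borel outer measure. Monotonicity is immediate, and countable subadditivity on open sets follows by subordinating a test function to a finite subcover (available by compactness of its support) and splitting it through a partition of unity, with the general case recovered by outer regularity. To promote $\mu^*$ to a genuine measure on the Borel $\sigma$-algebra I would verify Carathéodory's criterion for open sets, which generate the Borel sets; the essential tool is Urysohn's lemma, which supplies, for any compact $K$ contained in an open $U$, a function $f \in C(X)$ with $0 \le f \le 1$, $f \equiv 1$ on $K$, and $\mathrm{supp}(f) \subseteq U$. These bump functions allow the contributions of a set and its complement to be separated additively.

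The third and central step is the representation $\psi(f) = \int_X f \, d\mu$. By linearity, and since every real $f \in C(X)$ is a constant multiple of $1$ plus a scaled nonnegative function, it suffices to prove the identity for $0 \le f \le 1$. Using the layer-cake identity $\int_X f \, d\mu = \int_0^1 \mu(\{f > t\}) \, dt$, I would partition $[0,1]$ with nodes $0 = t_0 < t_1 < \cdots < t_m = 1$ and squeeze $\psi(f)$ between Riemann sums of the form $\sum_j \mu(\{f > t_j\})\,\Delta t_j$ approximating the right-hand integral, the two-sided bounds being furnished by Urysohn functions subordinate to the open superlevel sets $\{f > t_j\}$. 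This is where the definition of $\mu$ through $\psi$ must interlock exactly with integration against $\mu$, and I expect it to be the main obstacle, since matching the upper and lower bounds forces a careful simultaneous use of the positivity and the continuity of $\psi$.

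Finally, regularity and uniqueness close the argument. Outer regularity holds by construction, and inner regularity on Borel sets follows from finiteness of $\mu^*$ together with compactness of $X$. For uniqueness, suppose $\nu(\cdot)$ is another regular Borel measure representing $\psi$; applying the established representation to Urysohn functions that approximate indicators of open sets forces $\mu(U) = \nu(U)$ for every open $U$, and outer regularity then propagates this equality to all Borel sets, yielding $\mu(\cdot) = \nu(\cdot)$.
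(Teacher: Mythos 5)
The paper never proves this statement: Theorem~\ref{thm:rieszmarkov} is quoted as a known result from Reed and Simon (Theorem IV.14 there) and is used purely as a black-box ingredient in the proof of Proposition~\ref{prop:measureextension}. So there is no in-paper argument to compare against; what you have written is an outline of the classical Riesz--Markov--Kakutani construction itself, essentially the proof in Rudin's \emph{Real and Complex Analysis}: define $\mu$ on open sets as a supremum of $\psi$ over subordinate test functions, extend by outer regularity, verify subadditivity via partitions of unity and Carath\'eodory measurability via Urysohn's lemma, prove the representation by squeezing $\psi(f)$ between sums built from superlevel sets, and get uniqueness from regularity. That is the standard and correct route, and your identification of the representation step as the delicate point is accurate. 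Two cautions. First, your claim that inner regularity ``follows from finiteness of $\mu^*$ together with compactness of $X$'' is false as a general principle: there exist finite, non-regular Borel measures on compact Hausdorff spaces (the Dieudonn\'e measure on the ordinal interval $[0,\omega_1]$ is the classical counterexample), so inner regularity cannot be deduced abstractly from finiteness and compactness; it must be extracted from the construction itself, first for open sets and then for measurable sets of finite measure. Second, a small remark in your favor: your observation that positivity alone forces $\psi(f) \le \psi(1)$ for $0 \le f \le 1$ shows the continuity hypothesis in the statement is actually redundant on a compact space, which is consistent with how the classical theorem is usually stated for positive linear functionals.
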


\begin{theorem}[\bf B.L.T., \cite{reed1980functional}] 
\label{thm:BLT}
Suppose $T:V_1 \rightarrow V_2$ is a bounded linear transformation from a normed linear space $\langle V_1, \lvert \lvert \cdot \rvert \rvert_1\rangle$ to a complete normed linear space $\langle V_2, \lvert \lvert \cdot \rvert \rvert_2\rangle$. Then, $T$ can be uniquely extended to a bounded linear transformation (with the same bound), $\tilde{T}$, from the completion of $V_1$ to $\langle V_2, ||\cdot||_2\rangle$. 
\end{theorem}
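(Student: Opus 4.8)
The plan is to build the extension $\tilde{T}$ by continuity, leveraging the density of $V_1$ inside its completion together with the completeness of the target space $V_2$. First I would recall that the completion $\overline{V_1}$ contains an isometric copy of $V_1$ as a dense subspace, so that every $x \in \overline{V_1}$ arises as the limit of some sequence $(x_n)$ drawn from $V_1$. For such an $x$ I would then \emph{define} the extension by $\tilde{T}(x) := \lim_{n \to \infty} T(x_n)$, and the bulk of the argument consists in verifying that this prescription makes sense and has the advertised properties.

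The two crucial verifications, and the only places where the hypotheses are genuinely used, concern the existence and the well-definedness of this limit. For existence, I would use boundedness to write $\|T(x_n) - T(x_m)\|_2 = \|T(x_n - x_m)\|_2 \leq \|T\| \, \|x_n - x_m\|_1$; since $(x_n)$ is Cauchy in $V_1$, the image sequence $(T(x_n))$ is Cauchy in $V_2$, and completeness of $V_2$ then supplies a limit. For well-definedness, I would take two sequences $(x_n)$ and $(y_n)$ in $V_1$ both converging to $x$ and observe that $\|T(x_n) - T(y_n)\|_2 \leq \|T\| \, \|x_n - y_n\|_1 \to 0$, so the assigned value is independent of the approximating sequence.

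With the map well-defined, the remaining properties follow by passing to the limit. Linearity is inherited from $T$ by combining approximating sequences for $x$ and $y$; that $\tilde{T}$ restricts to $T$ on $V_1$ is immediate upon choosing the constant sequence $x_n = x$. For the norm, taking the limit in $\|T(x_n)\|_2 \leq \|T\| \, \|x_n\|_1$ yields $\|\tilde{T}(x)\|_2 \leq \|T\| \, \|x\|_1$, hence $\|\tilde{T}\| \leq \|T\|$; the reverse inequality holds because $\tilde{T}$ agrees with $T$ on $V_1$, giving equality of the operator norms. Uniqueness follows from density: any two continuous linear maps that coincide on the dense subset $V_1$ must coincide on all of $\overline{V_1}$.

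I expect the main obstacle to be conceptual rather than computational, namely the careful establishment of well-definedness of $\tilde{T}$. This is the step that simultaneously requires boundedness of $T$ (to transfer the Cauchy property from $V_1$ to $V_2$) and completeness of $V_2$ (to guarantee the limit actually exists in the target). Once that construction is secured, every other claim --- linearity, the extension property, preservation of the bound, and uniqueness --- reduces to routine limiting arguments.
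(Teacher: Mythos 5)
Your proof is correct and complete: it is the standard extension-by-continuity argument (transfer the Cauchy property via boundedness, use completeness of $V_2$ for existence, check well-definedness, then obtain linearity, the extension property, preservation of the operator norm, and uniqueness by density). Note that the paper itself offers no proof of this statement --- it is quoted as a known background result from the cited reference \cite{reed1980functional} and used as a tool in the proof of Proposition~\ref{prop:measureextension} --- so there is nothing in the paper to compare against; your argument is precisely the one found in that reference.
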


\begin{proposition}
[\bf Stone-Weierstrass (corollary), \cite{reed1980functional}] 
\label{thm:stoneweierstrass}
The polynomials are dense in $C[a,b]$, the space of real-valued functions continuous over the interval $[a,b]$, for any finite real number $a < b$.
\end{proposition}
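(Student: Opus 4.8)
The plan is to first reduce the statement on a general interval $[a,b]$ to the unit interval $[0,1]$ by an affine reparametrization, and then to exhibit, for each continuous function, an explicit sequence of polynomials converging to it uniformly. Given $f \in C[a,b]$, define $\tilde{f} \in C[0,1]$ by $\tilde{f}(t) = f(a + (b-a)t)$; since $t \mapsto a + (b-a)t$ is an affine bijection of $[0,1]$ onto $[a,b]$, it maps polynomials to polynomials and preserves the supremum norm. Hence it suffices to approximate $\tilde{f}$ uniformly on $[0,1]$ by polynomials, as pulling such an approximation back through the inverse change of variables yields polynomials approximating $f$ on $[a,b]$ to the same accuracy.

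On $[0,1]$, I would use the Bernstein polynomials of $\tilde{f}$, namely
\begin{equation*}
    B_n(t) := \sum_{k=0}^n \tilde{f}\!\left(\tfrac{k}{n}\right) \binom{n}{k} t^k (1-t)^{n-k},
\end{equation*}
and prove that $B_n \to \tilde{f}$ uniformly on $[0,1]$. The key analytic inputs are: (i) $\tilde{f}$ is uniformly continuous and bounded on the compact set $[0,1]$, so for every $\varepsilon>0$ there is a $\delta>0$ with $|\tilde{f}(s)-\tilde{f}(t)|<\varepsilon$ whenever $|s-t|<\delta$, and $M := \sup_{[0,1]}|\tilde{f}|<\infty$; and (ii) the elementary moment identities for the binomial weights $w_{n,k}(t) := \binom{n}{k} t^k (1-t)^{n-k}$, namely $\sum_k w_{n,k}(t) = 1$ and $\sum_k (k-nt)^2 w_{n,k}(t) = n\,t(1-t)$.

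For fixed $t$, I would estimate $|B_n(t)-\tilde{f}(t)| = \big|\sum_k (\tilde{f}(k/n)-\tilde{f}(t))\,w_{n,k}(t)\big|$ by splitting the index set into those $k$ with $|k/n - t| < \delta$ and those with $|k/n - t| \ge \delta$. On the near set the summand is bounded by $\varepsilon$ times the total weight, contributing at most $\varepsilon$; on the far set the factor $|\tilde{f}(k/n)-\tilde{f}(t)|$ is bounded by $2M$, while $\sum_{|k/n-t|\ge\delta} w_{n,k}(t) \le \frac{1}{\delta^2 n^2}\sum_k (k-nt)^2 w_{n,k}(t) = \frac{t(1-t)}{\delta^2 n} \le \frac{1}{4\delta^2 n}$ by the second-moment identity (a Chebyshev-type bound). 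Thus $\|B_n - \tilde{f}\|_\infty \le \varepsilon + \frac{M}{2\delta^2 n}$, and choosing $n$ large makes the right-hand side smaller than $2\varepsilon$ uniformly in $t$. Since $\varepsilon$ is arbitrary, $B_n \to \tilde{f}$ uniformly, which completes the argument after transporting back to $[a,b]$.

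The main obstacle is obtaining the uniform (rather than merely pointwise) estimate: the bound must not depend on $t$. This is exactly what the uniformity of continuity on the compact interval and the $t$-independent majorant $t(1-t)\le 1/4$ in the second-moment identity provide, so care is needed to verify that both $\delta$ and the resulting threshold on $n$ can be chosen independently of the point $t$. I note that, because the proposition is stated as a corollary of the Stone--Weierstrass theorem, an alternative (non-constructive) route is available: the polynomials form a subalgebra of $C[a,b]$ that contains the constants and separates points of $[a,b]$ (the monomial $x$ already separates them), whence density follows immediately from the general Stone--Weierstrass theorem.
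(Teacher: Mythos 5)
Your proof is correct. Note, however, that the paper does not prove this statement at all: it quotes it as a known background result (a corollary of the Stone--Weierstrass theorem, cited to Reed--Simon) and uses it only as an ingredient in the proof of Proposition~\ref{prop:measureextension}, where density of $\mathbb{P}_{1,\infty}$ in $C[-1,1]$ is needed to apply the B.L.T.\ and Riesz--Markov theorems. The route the paper implicitly relies on is exactly the one you sketch in your closing remark: the polynomials form a subalgebra of $C[a,b]$ containing the constants and separating points, so the general Stone--Weierstrass theorem applies. Your main argument is a genuinely different and more elementary path: the affine reduction to $[0,1]$ followed by the Bernstein-polynomial construction, with the uniform estimate
\begin{equation*}
\lVert B_n - \tilde f \rVert_\infty \leq \varepsilon + \frac{M}{2\delta^2 n}
\end{equation*}
obtained from the second-moment (Chebyshev-type) bound $\sum_{|k/n - t| \geq \delta} w_{n,k}(t) \leq t(1-t)/(\delta^2 n) \leq 1/(4\delta^2 n)$. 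This buys something the abstract approach does not: it is fully constructive, self-contained, and yields an explicit approximation rate in terms of the modulus of continuity, whereas the Stone--Weierstrass route is shorter but non-constructive and invokes heavier machinery. Both are valid; your handling of the uniformity issue (the $t$-independent choices of $\delta$ and $n$) is the one point where such a proof can go wrong, and you address it correctly.
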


Proposition~\ref{prop:measureextension} from Section~\ref{sec:reformulation_theory} is a known result and may be found in, e.g., \cite{nie2023moment} and \cite{curto1991recursiveness}. We restate the proposition here and repeat the proof in this context for the sake of completeness.

\measureextension*
\begin{proof}
Let $F := \mathbb{P}_{1,d} $ be the vector space of 1D polynomials of degree at most $d$, $E = \mathbb{P}_{1,\infty}:= \mathbb{R}[x]$, the space of 1D polynomials with real coefficients of any finite degree, and
\begin{equation}
	K := \left \{ p \in  \mathbb{P}_{1,\infty} \, : \, p(x) \geq 0 \;\; \forall \;\; x \in [-1,1]  \right \}
\end{equation}	
be the cone of 1D polynomials of any finite degree non-negative on $[-1,1]$. We first demonstrate that
$$ E = K+F.$$
Given $p \in E$, let $h(x) \equiv \min_{x \in [-1,1]} p(x) > -\infty$. This constant polynomial is well-defined since continuous functions reach their extrema over compact sets. Further, $h \in F$ as $F$ contains the constant polynomials by definition. Finally, the function
\begin{equation}
	p(x) - h(x) = p(x) - \min_{x \in [-1,1]} p(x) \geq 0 \;\; \forall \;\; x \in [-1,1]
\end{equation}
implies that $p-h \in K$ by construction. Since this holds for every element $p\in E$, this demonstrates the claim.

Now, consider the following family of linear functionals over $F$: $\forall i=1,..., D$, where $q = \sum_{k=0}^d q_k \,x^k \in F$.  Define
\begin{equation}
	\phi_i (q(x_i)) :=  \sum_{{\bf n}_i=0}^d   q_{{\bf n}_i}  \, \mu_{i,{\bf n}_i}.
\end{equation}
Note in particular that $\phi_i (x_i^{{\bf n}_i}) = \mu_{i,{\bf n}_i}$. We claim that each functional is non-negative on $F \cap K$. Indeed, assume $q$ is non-negative; i.e., $q \in F \cap K$. Then, by Theorem \ref{thm:fekete} there exists polynomials $f \in \mathbb{P}_{D,d}$ and $g \in \mathbb{P}_{D,d-1}$, such that
\begin{align}
	\phi_i (q(x_i)) &= \phi_i \left (f^2(x_i) + (1-x_i^2) \, g^2(x_i) \right )\\
     &= \sum_{{\bf m}_i=0, {\bf n}_i=0}^d f_{{\bf m}_i} f_{{\bf n}_i} \, \phi_i(x^{{\bf m}_i+{\bf n}_i}) + \sum_{{\bf m}_i=0, {\bf n}_i=0}^{d-1} g_{{\bf m}_i} g_{{\bf n}_i} \, \phi_i( (1-x_i^2) \, x^{{\bf m}_i+{\bf n}_i}) \\
     &= \sum_{{\bf m}_i=0, {\bf n}_i=0}^d f_{{\bf m}_i} f_{{\bf n}_i} \, \mu_{i,{\bf m}_i+{\bf n}_i} + \sum_{{\bf m}_i=0, {\bf n}_i=0}^{d-1} g_{{\bf m}_i} g_{{\bf n}_i} \, \left  (\mu_{i,{\bf m}_i+{\bf n}_i} - \mu_{i,{\bf m}_i+{\bf n}_i+2} \right ) \\
	 &= f^T \, \mathcal{M}_d ( \mu_i ) \, f + g^T \,  \mathcal{M}_{d-1} ( \mu_i;(1-x_i^2) )   \, g \\
	 & \geq 0,
\end{align}
where the last inequality follows by assumption since $\mathcal{M}_d ( \mu_i ) \succeq 0$ and $\mathcal{M}_{d-1} ( \mu_i; \, 1-x_i^2 ) \succeq 0 $ for every $i$. Therefore, $\phi_i(\cdot)$ are non-negative functionals on $F \cap K$ and thus satisfies the hypothesis of the Riesz extension theorem (Theorem \ref{thm:rieszextension}). This implies that for each $\phi_i(\cdot)$ there exists an extension $\psi_i : E \rightarrow \mathbb{R} $ that is non-negative on $K$. 

Recall that $E$ corresponds to $\mathbb{P}_{1,\infty}$, which is a dense subspace of the space $C[-1,1]$ of functions continuous on $[-1,1]$ by Theorem \ref{thm:stoneweierstrass} (Stone-Weierstrass). For each $i$, we can therefore invoke the B.L.T. theorem (Theorem \ref{thm:BLT}) with $\psi_i(\cdot)$ a linear transformation from the normed space $\langle \mathbb{P}_{1,\infty}, \lvert \lvert \cdot \rvert \rvert_\infty \rangle$ to the complete normed space $\langle \mathbb{R}, \lvert \cdot \rvert \rangle$. In this way, we can assume w.l.o.g. that
$$\psi_i : C([-1,1]) \rightarrow \mathbb{R}$$
is a continuous linear functional defined over the entire space $C([-1,1])$ and non-negative over the cone of continuous functions non-negative on $[-1,1]$.   Under these conditions, the Riesz-Markov theorem (Theorem \ref{thm:rieszmarkov}) implies the existence of unique regular Borel measures $\{\mu_i (\cdot)\}$ supported on $[-1,1]$, such that
\begin{equation}
	 \mu_{i,{\bf n}_i} = \phi_i (x_i^{{\bf n}_i}) = \int_{[-1,1]} x_i^{{\bf n}_i} \, \mathrm{d} \mu_i (x_i),
\end{equation} 
where $\mu_{i,0}=1$. Finally, it follows as a consequence of Caratheodory theorem (see~\cite{cohn2013measure}, Theorem 5.1.3) that there exists a unique regular product measure
$$\mu(\cdot) = \prod_{i=1}^D \mu_i(\cdot)$$
with moments 
\begin{equation}
	 \mu_{\bf n}  =  \int_{[-1,1]^D} x^{\bf n} \, \mathrm{d} \mu(x)   =  \prod_{i=1}^D \, \int_{[-1,1]} x_i^{{\bf n}_i} \, \mathrm{d} \mu_i (x_i) = \prod_{i=1}^D \mu_{i,{\bf n}_i}, \; 0 \leq {\bf n}_i \leq d,
\end{equation} 
and the constraints $\mu_{i,0} = 1, \forall \, i$ further imply that
\begin{equation}
    \mu_{\bf n}([-1,1]^D) = \prod_{i=1}^D \mu_{i}([-1,1]) = \prod_{i=1}^D \mu_{i,0} = 1,
\end{equation}
which proves the desired result. 
\end{proof}

We now introduce some quantities that will prove important for the remainder of the proof. For $0 < L \in \mathbb{N}$, define 
$\mathcal{F}_L$ and $\mathcal{S}_L$ to be
\begin{align}
\begin{split}
     \mathcal{F}_L :=
     \bigg\{   \bigg\{\mu_{\bf n}:= {}& \sum_{l=1}^L \prod_{i=1}^D \mu_{i, {\bf n}_i}^{(l)}  \bigg\}_{\lvert {\bf n} \rvert \leq d}  \, : \, \{\mu_i^{(l)} \}_{i,l=1}^{D,L} \in \mathbb{R}^{(2d+1) \times D \times L},\, \\   
     & \, \mathcal{M}_d(\mu^{(l)}_i), \, \mathcal{M}_{d-1}(\mu^{(l)}_i ; (1-x_i^2)) \succeq 0; \forall \, i,l, \, \mu_{0} = 1 \bigg\},
\end{split}
\end{align}
\begin{align}
     \mathcal{S}_L &:= \left \{ \mu(\cdot) := \sum_{l=1}^L \prod_{i=1}^D  \mu_i^{(l)} (\cdot)  \, :  \,  \mu^{(l)}_i (\cdot) \in \mathbb{B}_1; \forall \; i,l, \, \mu\left( [-1,1]^D \right ) = 1\right \},
\end{align}
where, as before, the symbol $\mathbb{B}_D$ represents the set of finite regular Borel measures supported over $[-1,1]^D$. Also note that $\mathcal{F}_L$ corresponds to the image of feasible set under the map $\phi(\cdot)$ in Problem~\ref{eq:reformulationcontinuous}. With this notation, we may now proceed.

\begin{proposition}
    \label{prop:eqsets}
There exists a continuous surjective map,
$$\zeta :\mathcal{S}_L \rightarrow \mathcal{F}_L,$$
between $ \mathcal{S}_L$ and $ \mathcal{F}_L$ that satisfies
\begin{equation}
    \left [  \zeta \left ( \mu(\cdot) \right ) \right ]_{\bf n} =  \sum_{l=1}^L \prod_{i=1}^D \mu_{i, {\bf n}_i}^{(l)}
\end{equation}
for every $  \lvert n \rvert  \leq d$.
\end{proposition}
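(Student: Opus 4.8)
The plan is to take $\zeta$ to be the truncated moment map: for $\mu(\cdot) \in \mathcal{S}_L$, set $[\zeta(\mu(\cdot))]_{\bf n} := \int_{[-1,1]^D} x^{\bf n}\,\mathrm{d}\mu(x)$ for each $|{\bf n}| \leq d$. The first task is to check this is well defined and lands in $\mathcal{F}_L$. Since the moments of a measure depend only on the measure and not on any particular decomposition, the definition is unambiguous even though the representation $\mu(\cdot) = \sum_{l=1}^L \prod_{i=1}^D \mu_i^{(l)}(\cdot)$ need not be unique. Fixing one such representation, the factor moments $\mu_{i,{\bf n}_i}^{(l)} := \int_{[-1,1]} x_i^{{\bf n}_i}\,\mathrm{d}\mu_i^{(l)}(x_i)$ factor the integral and give $[\zeta(\mu(\cdot))]_{\bf n} = \sum_{l}\prod_i \mu_{i,{\bf n}_i}^{(l)}$, as required. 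That these factor moments satisfy the two semidefinite constraints follows from the Gram-matrix identities $f^T \mathcal{M}_d(\mu_i^{(l)}) f = \int f(x_i)^2\,\mathrm{d}\mu_i^{(l)} \geq 0$ and $g^T \mathcal{M}_{d-1}(\mu_i^{(l)}; 1-x_i^2) g = \int (1-x_i^2)g(x_i)^2\,\mathrm{d}\mu_i^{(l)} \geq 0$, the latter because $1-x_i^2 \geq 0$ on the support $[-1,1]$; together with $\mu_0 = \mu([-1,1]^D) = 1$ this places $\zeta(\mu(\cdot))$ in $\mathcal{F}_L$.

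For continuity I would equip $\mathcal{S}_L$ with the weak-$*$ topology inherited from the finite Borel measures on the compact cube $[-1,1]^D$, and $\mathcal{F}_L$ with the Euclidean topology it inherits as a subset of $\mathbb{R}^{\#\{{\bf n}: |{\bf n}|\leq d\}}$. Each coordinate of $\zeta$ is integration of the fixed continuous function $x^{\bf n}$ against the measure, so weak-$*$ convergence $\mu_k(\cdot) \to \mu(\cdot)$ forces $[\zeta(\mu_k(\cdot))]_{\bf n} \to [\zeta(\mu(\cdot))]_{\bf n}$ for every ${\bf n}$; since there are finitely many coordinates, $\zeta$ is continuous.

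The substantive step is surjectivity, and the natural tool is Proposition~\ref{prop:measureextension}. Given a target $\{\mu_{\bf n}\}_{|{\bf n}|\leq d} \in \mathcal{F}_L$, I fix witnessing factor vectors $\{\mu_i^{(l)}\}$ with $\mathcal{M}_d(\mu_i^{(l)}), \mathcal{M}_{d-1}(\mu_i^{(l)}; 1-x_i^2) \succeq 0$ and try to recover 1D measures coordinatewise. The obstacle, which I expect to be the only genuinely delicate point, is a normalization mismatch: Proposition~\ref{prop:measureextension} requires $\mu_{i,0}^{(l)} = 1$, whereas here only the aggregate constraint $\mu_0 = \sum_l \prod_i \mu_{i,0}^{(l)} = 1$ is available. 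Positive-semidefiniteness gives $\mu_{i,0}^{(l)} = [\mathcal{M}_d(\mu_i^{(l)})]_{0,0} \geq 0$, so I would split into cases. When $\mu_{i,0}^{(l)} > 0$, rescaling by $1/\mu_{i,0}^{(l)}$ preserves both semidefinite constraints and normalizes the zeroth moment to $1$, so Proposition~\ref{prop:measureextension} yields a probability measure on $[-1,1]$ whose rescaling by $\mu_{i,0}^{(l)}$ is a positive measure $\mu_i^{(l)}(\cdot)$ realizing the original moments. When $\mu_{i,0}^{(l)} = 0$, the positive-semidefinite Hankel structure of $\mathcal{M}_d(\mu_i^{(l)})$ forces the whole moment vector to vanish: a zero diagonal entry of a PSD matrix annihilates its row and column, and iterating this across the $2\times 2$ principal minors of the Hankel matrix propagates zeros through all moments up to degree $2d$, so the zero measure realizes the (vanishing) moments. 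Reassembling $\mu(\cdot) = \sum_l \prod_i \mu_i^{(l)}(\cdot)$ then produces an element of $\mathcal{S}_L$ whose total mass is $\sum_l \prod_i \mu_{i,0}^{(l)} = \mu_0 = 1$ and which satisfies $\zeta(\mu(\cdot)) = \{\mu_{\bf n}\}$, establishing surjectivity; everything outside the degenerate case reduces to the Gram-matrix identities and the already-established Proposition~\ref{prop:measureextension}.
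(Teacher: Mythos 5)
Your construction is essentially the paper's: the same truncated moment map, the same Gram-matrix identities for the two semidefinite constraints, and surjectivity via Proposition~\ref{prop:measureextension}. The one place you genuinely diverge is also the one place the paper is too quick. Proposition~\ref{prop:measureextension} is stated under the hypothesis $\mu_{i,0}=1$ for every $i$, while membership in $\mathcal{F}_L$ only imposes the aggregate condition $\sum_{l}\prod_{i}\mu^{(l)}_{i,0}=1$, so the paper's direct invocation of that proposition for each factor does not literally apply; your rescaling by $1/\mu^{(l)}_{i,0}$ when that mass is positive, together with the degenerate analysis when it vanishes, closes this gap cleanly. Two small remarks on that degenerate case: only the moments of degree at most $d$ enter $\zeta$, and row $0$ of the PSD Hankel matrix already forces those to vanish, so the full propagation you describe is not needed; moreover, as stated your iteration stalls at the top moment $\mu_{2d}$ if you use $\mathcal{M}_d\succeq 0$ alone (the matrix $\mathrm{diag}$-like counterexample with only $\mu_{2d}>0$ is PSD) and you would need $\mathcal{M}_{d-1}(\,\cdot\,;1-x_i^2)\succeq 0$ to kill it --- harmless here, but worth noting. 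Finally, you topologize $\mathcal{S}_L$ with the weak-$*$ topology where the paper proves a Lipschitz bound in total variation; continuity with respect to the coarser weak-$*$ topology is the stronger and arguably more natural statement, and either version suffices for the way continuity is used in Corollary~\ref{cor:equivproblem}.
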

\begin{proof}
    Given an element
    \begin{equation}
         \mu(\cdot) := \sum_{l=1}^L \prod_{i=1}^D  \mu_i^{(l)} (\cdot)  \in \mathcal{S}_L,
    \end{equation}
    we define the map $\zeta(\cdot)$ through
    \begin{equation}
         \left [  \zeta \left ( \mu(\cdot) \right ) \right ]_{\bf n} := \int_{[-1,1]^D}  \, x^{{\bf n}} \, \mathrm{d} \mu (x) = \sum_{l=1}^L \prod_{i=1}^D  \int_{[-1,1]^D}  \, x_i^{{\bf n}_i} \, \mathrm{d} \mu_i^{(l)} (x_i) = \sum_{l=1}^L \prod_{i=1}^D \mu_{i, {\bf n}_i}^{(l)} .
    \end{equation}
   First, we demonstrate that $\zeta \left ( \mu(\cdot) \right )$ belongs to $\mathcal{F}_L$. To do so, note that
    \begin{equation}
         \mu_0  =   \sum_{l=1}^L \prod_{i=1}^D  \mu^{(l)}_{i,0} = \left [  \zeta \left ( \mu(\cdot) \right ) \right ]_0 = \int \, \mathrm{d} \left ( \sum_{l=1}^L \prod_{i=1}^D     \mu_i^{(l)} (x_i) \right ) = \mu\left ( [-1,1]^D \right )  = 1.
    \end{equation}
    Thus, it suffices to shows that $\mathcal{M}_d(\mu^{(l)}_i)$ and $\mathcal{M}_{d-1}(\mu^{(l)}_i ;\, (1-x_i^2))$ are positive semi-definite for each $i=1,...,D$ and $l=1,...,L$. To observe this, note that following our definition,
    \begin{align}
        f^T \, \mathcal{M}_d(\mu^{(l)}_i) \, f &= \int_{-1}^1 \, f^2(x_i) \, \mathrm{d}\mu^{(l)}_i(x_i) \geq 0,\\
         g^T \, \mathcal{M}_{d-1}(\mu^{(l)}_i; (1-x_i^2)) \, g &= \int_{-1}^1 \, (1-x_i^2) \, g^2(x_i) \, \mathrm{d}\mu^{(l)}_i(x_i) \geq 0,
    \end{align}
    for every $f\in\mathbb{P}_{1,d} $ and $g\in\mathbb{P}_{1,d-1} $, positivity follows from the fact the integrands are non-negative over $[-1,1]$ and the measures are positive and supported over $[-1,1]$. Since this holds for every vector of coefficients $f$ and $g$, we conclude that all matrices are indeed positive semi-definite, thus proving the claim.
    
    Regarding surjectivity, consider an element $\mu \in \mathcal{F}_L$.
    Following Proposition~\ref{prop:measureextension}, for each vector of moments $\mu^{(l)}_{i}$, there exists of a regular Borel measure $\mu^{(l)}_i(\cdot)$ supported over $[-1,1]$, such that
    \begin{equation}
        \int_{[-1,1]^D} \, x_i^{{\bf n}_i} \, \mathrm{d} \mu_i^{(l)} (x_i) = \mu^{(l)}_{i,{\bf n}_i}
    \end{equation}
    for every $0 \leq {\bf n}_i   \leq 2d+1$ that also satisfies: $\sum_{l=1}^L \prod_{i=1}^D \mu^{(l)}_{i,0} = 1 $. The element
    \begin{equation}
        \mu(\cdot) := \sum_{l=1}^L \prod_{i=1}^D  \mu_i^{(l)} (\cdot) 
    \end{equation}
    therefore belongs to $\mathcal{S}_L$ and maps to $\mu$ through $\zeta(\cdot)$ via substitution. Since this is true for every element of $\mathcal{F}_L$, this demonstrates surjectivity. 
    
    Finally, given $\lvert x^{\bf n} \rvert \leq 1$,  $\forall x \in [-1,1]^D$, where $\lvert  \cdot \rvert$ indicates total variation (see, e.g., \cite{cohn2013measure}), continuity follows from the fact that for any two element $\mu(\cdot), \nu(\cdot) \in \mathcal{S}_L$, 
    \begin{align}
       &\left \lvert   \left [  \zeta \left ( \sum_{l=1}^L \prod_{i=1}^D  \mu_i^{(l)} (\cdot)   \right ) \right ]_{\bf n}  -    \left [  \zeta \left ( \sum_{l=1}^L \prod_{i=1}^D  \nu_i^{(l)} (\cdot)   \right ) \right ]_{\bf n}  \right \rvert  \\
       &= \left \lvert \int_{[-1,1]^D} \, x^{\bf n}  \, \mathrm{d}  \left ( \sum_{l=1}^L \prod_{i=1}^D  \mu_i^{(l)} (\cdot) - \sum_{l=1}^L \prod_{i=1}^D  \nu_i^{(l)} (\cdot) \right )  \right \rvert \\
       &\leq \left  \lvert  \sum_{l=1}^L \prod_{i=1}^D  \mu_i^{(l)} (\cdot) - \sum_{l=1}^L \prod_{i=1}^D  \nu_i^{(l)} (\cdot) \right \rvert
    \end{align}
    by the triangle inequality. 
\end{proof}

\begin{corollary}
    \label{cor:equivproblem}
    Consider the problems
    \begin{equation}
    \label{eq:problemmoments}
        \min_{ \mu \in \mathcal{F}_L }   \sum_{ n \in \mathrm{supp}(p)} \, p_n \left ( \sum_{l=1}^L  \prod_{i=1}^D \mu^{(l)}_{i,{\bf n}_i} \right ),
    \end{equation}
    and
    \begin{equation}
        \label{eq:problemmeasure}
        \min_{ \mu(\cdot) \in \mathcal{S}_L } \; \int p(x) \, \mathrm{d} \left (  \sum_{l=1}^L  \prod_{i=1}^D \mu^{(l)}_i (x_i) \right ).  
    \end{equation}
    The following holds:
    \begin{enumerate}
        \item Both problem share the same global minimum value. 
        \item $ \mu(\cdot) \in \mathcal{S}_L$ corresponds to a global minimum of Problem \ref{eq:problemmeasure} if and only if $ \zeta(\mu(\cdot)) \in \mathcal{F}_L$ corresponds to a global minimum of Problem \ref{eq:problemmoments}.
        \item If $ \mu \in \mathcal{F}_L$ corresponds to a local minimum of Problem \ref{eq:problemmoments}, then any element of the non-empty set $\zeta^{-1}(\mu) \in \mathcal{S}_L$ consists of local minima of Problem \ref{eq:problemmeasure}.
    \end{enumerate}
\end{corollary}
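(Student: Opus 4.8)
The plan is to reduce all three claims to a single structural fact: the objective of Problem~\ref{eq:problemmeasure} is exactly the composition of the map $\zeta$ from Proposition~\ref{prop:eqsets} with the linear objective of Problem~\ref{eq:problemmoments}. Writing $J(\mu) := \sum_{{\bf n} \in \mathrm{supp}(p)} p_{\bf n} \, \mu_{\bf n}$ for the moment objective on $\mathcal{F}_L$, linearity of the integral together with $p(x) = \sum_{\bf n} p_{\bf n} x^{\bf n}$ yields
\begin{align}
    \int p(x) \, \mathrm{d} \mu(x) &= \sum_{{\bf n} \in \mathrm{supp}(p)} p_{\bf n} \int x^{\bf n} \, \mathrm{d} \mu(x) \nonumber \\
    &= \sum_{{\bf n} \in \mathrm{supp}(p)} p_{\bf n} \, \left [ \zeta(\mu(\cdot)) \right ]_{\bf n} = J(\zeta(\mu(\cdot)))
\end{align}
for every $\mu(\cdot) \in \mathcal{S}_L$. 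Hence the objective of Problem~\ref{eq:problemmeasure} is $J \circ \zeta$, and the corollary becomes a set of elementary statements about minimizing a function that factors through a continuous surjection.

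For Parts 1 and 2 I would invoke surjectivity of $\zeta$. Since $\zeta(\mathcal{S}_L) = \mathcal{F}_L$, the image of $J \circ \zeta$ over $\mathcal{S}_L$ coincides with the image of $J$ over $\mathcal{F}_L$, so their infima agree, which is Part 1. For Part 2, if $\mu(\cdot)$ globally minimizes $J \circ \zeta$ then $J(\zeta(\mu(\cdot)))$ equals the common infimum, forcing $\zeta(\mu(\cdot))$ to globally minimize $J$; conversely, if $\zeta(\mu(\cdot))$ minimizes $J$ then $J \circ \zeta$ attains the common infimum at $\mu(\cdot)$. Both directions are immediate once the factorization and surjectivity are established.

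The delicate point, which I expect to be the main obstacle, is Part 3: transferring \emph{local} minimality from $\mathcal{F}_L$ upstream to $\mathcal{S}_L$. Here the crucial property is continuity of $\zeta$ (again from Proposition~\ref{prop:eqsets}), which pulls back neighborhoods. If $\mu$ is a local minimum of $J$, choose an open neighborhood $U \ni \mu$ in $\mathcal{F}_L$ on which $J \geq J(\mu)$; then for any $\mu(\cdot) \in \zeta^{-1}(\mu)$ (non-empty by surjectivity), the set $\zeta^{-1}(U)$ is open in $\mathcal{S}_L$ and contains $\mu(\cdot)$, and on it $J \circ \zeta \geq J(\mu) = (J \circ \zeta)(\mu(\cdot))$, so $\mu(\cdot)$ is a local minimum of Problem~\ref{eq:problemmeasure}. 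The argument succeeds only in the stated direction, downstream to upstream, because continuity guarantees that preimages of neighborhoods are neighborhoods; the reverse would require $\zeta$ to be an open map, which is not claimed. I would be careful to phrase local minimality in the total-variation topology on $\mathcal{S}_L$ used in Proposition~\ref{prop:eqsets}, so that the continuity proved there applies verbatim.
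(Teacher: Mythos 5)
Your proposal is correct and follows essentially the same route as the paper: the factorization of the measure objective through $\zeta$ (the paper's Equation~\eqref{eq:invfunc}), surjectivity of $\zeta$ for Parts 1 and 2, and continuity of $\zeta$ to pull back a neighborhood witnessing local minimality for Part 3. The only cosmetic difference is that you phrase Parts 1 and 2 via equality of images rather than the paper's argument by contradiction, which changes nothing of substance.
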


\begin{proof} 
First, note that by definition, the objective functional satisfies
\begin{align}
    \label{eq:invfunc}
    \int p(x) \, \mathrm{d} \left(\sum_{l=1}^L  \prod_{i=1}^D \mu^{(l)}_i(x_i) \right ) &=  \sum_{{\bf n} \in \mathrm{supp}(p)} \, p_{\bf n} \left (  \left [  \zeta \left(\sum_{l=1}^L  \prod_{i=1}^D \mu^{(l)}_i(\cdot) \right )  \right ]_{{\bf n}} \right ).
\end{align}
This implies that $ \mu^*(\cdot) \in \mathcal{S}_L$ is a global minimum of Problem \ref{eq:problemmeasure} if and only if $\mu^* := \zeta(\mu^*(\cdot))$ is a global minimum of Problem \ref{eq:problemmoments}. Indeed, if a point in $\mathcal{F}_L$ achieves a strictly lower value, then any point belonging to the pre-image of that point under $\zeta(\cdot)$~\footnote{This pre-image is non-empty by surjectivity. See Proposition \ref{prop:eqsets}.} achieves a lower value and vice-versa. This is a contradiction and demonstrates Points $1$ and $2$.

For the third claim, consider a local minimum $\mu \in \mathcal{F}_L $. Let $\mu(\cdot)\in \mathcal{S}_L $ be any element of the pre-image of this local minimum under $\zeta(\cdot)$; i.e., $\mu(\cdot) \in    \zeta^{-1} \left (  \mu \right )$, which is non-empty due to surjectivity. Since $\mu$ is a local minimum, there exists an open neighborhood $\mathcal{N}$ of $\mu$ in $\mathcal{F}_L$ where the functional may only achieve values that are either greater than or equal to the current value. Now, consider the set
\begin{equation}
     \zeta^{-1} (\mathcal{N}) \subset \mathcal{S}_L,
\end{equation}
which is open and contains $\zeta^{-1} \left (  \mu \right )$ since $\zeta(\cdot)$ is continuous and surjective. From the invariance of the functional value (Equation~\eqref{eq:invfunc}), it follows that every element of this set must lead to a value in Problem \ref{eq:problemmeasure} that is either greater than or equal to the current value; i.e., the set $\zeta^{-1} (\mu ) $  must consist of local mimina of Problem \ref{eq:problemmeasure}. This proves Point $3$.
\end{proof}


\equivalence*
\begin{proof}
We claim that the optimal value $\pi^*$ of Problem \ref{eq:reformulationcontinuous} equals

\begin{equation}
\min_{x \in [-1,1]^D} \, p(x). 
\end{equation}
To see why this claim is true, begin by letting $x^* \in [-1,1]^D$ be an optimal location~\footnote{This optimal location exists since $p(x)$ is continuous and $[-1,1]^D$ is compact.} and note that the choice,
\begin{equation}
   \mu^{(0)}_{i,{\bf n}_i } = [\delta^{(0)}_{x^*_i}]_{{\bf n}_i} =  (x_i^*)^{{\bf n}_i} 
\end{equation}
and $\mu^{(1)}_{i, {\bf n}_i} \equiv 0  \; \forall \, {\bf n}$, is feasible for Problem \ref{eq:reformulationcontinuous}.
Therefore, 
\begin{equation}
    \pi^* \leq \sum_{{\bf n} \in \mathrm{supp}(p)} p_{\bf n} \, \prod_{i=1}^D \mu^{(0)}_i  = \sum_{{\bf n} \in \mathrm{supp}(p)} p_{\bf n} \, (x^*)^{\bf n} = p(x^*).
\end{equation}
For every element of $\mathcal{S}_L$, consider that
\begin{align}
    \label{eq:equivmin_upperbound}
    \int_{[-1,1]^D} &p(x) \, \mathrm{d} \left(\sum_{l=1}^L \prod_{i=1}^D \mu_i(x_i) \right ) \geq  p(x^*)   \, \int_{[-1,1]^D} \mathrm{d} \left ( \sum_{l=1}^L \prod_{i=1}^D \mu^{(l)}_{i}(x_i) \right )  = p(x^*).
\end{align}
It then follows by Proposition \ref{prop:measureextension} that
\begin{equation}
    \pi^* \geq \min_{x \in [-1,1]^D} \, p(x).
\end{equation}
Together with Equation~\eqref{eq:equivmin_upperbound}, this  implies that
\begin{equation}
    \pi^* = \min_{x \in [-1,1]^D} \, p(x)
\end{equation}
as claimed.
\end{proof}

\begin{proposition}
    \label{prop:descentmeasure}
    Consider an element of $\mathcal{S}_2$ of the form,
        \begin{equation}
           \mu(\cdot) := \mu^{(0)}(\cdot)  +  \mu^{(1)}(\cdot ) :=  \prod_{i=1}^D \mu_i^{(0)}(\cdot)  +  \prod_{i=1}^D \mu_i^{(1)}(\cdot).
        \end{equation}
    Assume such element is not a global minimum of the problem
    \begin{equation}
            \min_{ \mu(\cdot) \in \mathcal{S}_2}\; \int p(x) \, \mathrm{d} \left( \prod_{i=1}^D \mu_i^{(0)}(x_i)  + \prod_{i=1}^D \mu_i^{(1)}(x_i) \right ).
    \end{equation}
    Then, if either ($1$) $\mu^{(0)}\left (  [-1,1]^D \right )  \in \{0,1\}$, or ($2$) $\mu^{(0)}\left (  [-1,1]^D \right ) \in (0,1)$, and
    \begin{equation}
       \frac{1}{\mu^{(0)}([-1,1]^D) } \, \int p(x) \, \mathrm{d}  \mu^{(0)}(x) \not =  \frac{1}{\mu^{(1)}([-1,1]^D) } \, \int p(x) \, \mathrm{d}  \mu^{(1)}(x),
    \end{equation}
    then there exists a path $\pi(t)$, $t \in [0,1]$ lying entirely in $\mathcal{S}_2$ and originating from this point along which $\int p(x) \, \mathrm{d} \pi(t)(x)$ monotonically decreases.
\end{proposition}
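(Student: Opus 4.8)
The plan is to construct the decreasing path explicitly in each case, exploiting that the objective $\mu(\cdot) \mapsto \int p\,\mathrm{d}\mu$ is \emph{linear} in the measure, so that any path that is affine in the underlying measures yields an affine, hence monotone, objective. First I would fix notation: set $a := \mu^{(0)}([-1,1]^D)$ and $b := \mu^{(1)}([-1,1]^D)$, so $a+b=1$ because $\mu(\cdot) \in \mathcal{S}_2$; let $x^\ast \in [-1,1]^D$ be a global minimizer of $p$ over the hypercube (which exists by compactness) with $p^\ast := p(x^\ast)$. The single ingredient I would borrow is the pointwise inequality from the proof of Theorem~\ref{thm:equivalence}: for any positive product measure $\nu(\cdot)$ supported on $[-1,1]^D$ one has $\int p\,\mathrm{d}\nu \geq p^\ast\,\nu([-1,1]^D)$, since $p(x) \geq p^\ast$ everywhere and $\nu$ is positive. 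In particular, when $a,b>0$ the normalized averages $\alpha := \tfrac{1}{a}\int p\,\mathrm{d}\mu^{(0)}$ and $\beta := \tfrac{1}{b}\int p\,\mathrm{d}\mu^{(1)}$ satisfy $\alpha,\beta \geq p^\ast$, and the objective value equals $a\alpha + b\beta$.

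For Case~(1), $a \in \{0,1\}$, I would argue by symmetry and take $a=0$, so that $\mu(\cdot) = \mu^{(1)}(\cdot)$ is a single probability product measure with objective value $J = \int p\,\mathrm{d}\mu^{(1)} > p^\ast$, the strict inequality holding because $\mu$ is not a global minimum. Then I would set $\pi(t) := t\,\delta_{x^\ast} + (1-t)\,\mu^{(1)}(\cdot)$, observing that $t\,\delta_{x^\ast} = \prod_{i=1}^D \big(t^{1/D}\delta_{x_i^\ast}\big)$ and $(1-t)\mu^{(1)} = \prod_{i=1}^D\big((1-t)^{1/D}\mu_i^{(1)}\big)$ are genuine product measures for $t \in [0,1]$, so $\pi(t) \in \mathcal{S}_2$ with $\pi(0) = \mu$. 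Linearity then gives $\int p\,\mathrm{d}\pi(t) = t\,p^\ast + (1-t)J$, which strictly decreases in $t$ since $J > p^\ast$.

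For Case~(2), $a \in (0,1)$ with $\alpha \neq \beta$, I would take $\beta < \alpha$ without loss of generality, keep the normalized shapes $\bar\mu^{(0)} := \mu^{(0)}/a$ and $\bar\mu^{(1)} := \mu^{(1)}/b$ fixed, and transfer mass from the worse to the better component by setting $a(t) := a(1-t)$, $b(t) := 1-a(t)$, and $\pi(t) := a(t)\,\bar\mu^{(0)} + b(t)\,\bar\mu^{(1)}$. Redistributing each scalar mass as a $D$-th root across the factors shows both summands remain product measures, so $\pi(t) \in \mathcal{S}_2$ and $\pi(0) = \mu$. Because the normalized averages are held constant, $\int p\,\mathrm{d}\pi(t) = a(t)\alpha + b(t)\beta = a\alpha + b\beta - a\,t\,(\alpha-\beta)$, which decreases monotonically since $a>0$ and $\alpha-\beta>0$.

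I expect the construction to be essentially routine; the two points requiring care are verifying that the rescaled measures remain \emph{product} measures whose total mass sums to one (handled by the $D$-th-root redistribution of mass across the factors), and correctly combining positivity of the measures with the pointwise bound $p \geq p^\ast$ to guarantee both $J > p^\ast$ in Case~(1) and the interpretation of $\alpha,\beta$ as genuine averages in Case~(2). The main subtlety is conceptual rather than computational: the complementary degenerate configuration $a \in (0,1)$ with $\alpha = \beta$ is deliberately excluded by the hypotheses, since along the mass-transfer path the objective would then be constant; this case is not needed here and is absorbed into the separate analysis underlying Theorem~\ref{thm:globalmin}.
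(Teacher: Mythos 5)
Your proposal is correct and follows essentially the same route as the paper: in Case (1) you interpolate affinely toward the Dirac product measure $\delta_{x^\ast}$, and in Case (2) you transfer mass from the component with the larger normalized average to the other, in both cases preserving the product structure by distributing the scalar factor as a $D$-th root across the one-dimensional factors and using linearity of the objective to get strict monotone decrease. The paper's parametrizations $(1-t)^{1/D}\mu_i^{(0)}$, $t^{1/D}\delta_{x_i^\ast}$ and $(1+t\,\alpha/(1-\alpha))^{1/D}\mu_i^{(1)}$ are exactly your paths after reparametrization, so there is nothing substantive to distinguish the two arguments.
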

\begin{proof}
Consider a sub-optimal element $\mu(\cdot)$ of $\mathcal{S}_2$ of the form,
\begin{equation}
    \mu(\cdot) =  \mu^{(0)}(\cdot) +  \mu^{(1)}(\cdot) = \prod_{i=1}^D \mu^{(0)}_i(\cdot) +   \prod_{i=1}^D \mu^{(1)}_i(\cdot).
\end{equation}
From Theorem \ref{thm:equivalence}, we know that the optimal value is given by
\begin{equation}
     \min_{x \in [-1,1]^D} p(x) = p(x^*)
\end{equation}
for some $x^* \in [-1,1]^D$. Therefore, sub-optimality implies that
\begin{equation}
    \int p(x) \, \mathrm{d} \mu(x) > p(x^*)
\end{equation}
We consider two cases: 
\begin{enumerate}
    \item The case where $\mu^{(0)}\left (  [-1,1]^D \right ) \in \{0,1\}$;
    \item The case where $\mu^{(0)}\left (  [-1,1]^D \right ) \in (0,1)$.
\end{enumerate}

\noindent
{\bf Case 1:} $\mu^{(0)}\left (  [-1,1]^D \right ) \in \{0,1\}$\\
Assume w.l.o.g. that $\mu^{(0)}\left (  [-1,1]^D \right ) = 1$, so that $\mu(\cdot) = \prod_{i=1}^D \mu^{(0)}_i(\cdot)$.  Consider the measure,
\begin{equation}
    \delta_{x^*}(\cdot) = \prod_{i=1}^D \delta(\cdot - x_i^*),
\end{equation}
where $\delta(\cdot)$ is a point (delta) measure supported at the origin. For $t \in [0,1]$, consider the parametrized path,
\begin{equation}
    \pi(t) := \prod_{i=1}^D \pi^{(0)}_{i} (t) + \prod_{i=1}^D \pi^{(1)}_{i} (t),
\end{equation}
where
\begin{align}
	\pi^{(0)}_{i} (t) &:= (1-t)^{1/D}  \,  \mu^{(0)}_{i}(\cdot),\quad{and}\\
 	\pi^{(1)}_{i} (t) &:= t^{1/D}\, \delta (\cdot - x_i^*). \\
\end{align}
Note that this path starts at $ \pi(0) = \mu^{(0)}(\cdot) = \mu(\cdot)$, we claim that (1) it lies entirely in $\mathcal{S}_2$, and (2) the objective monotonically decreases along it. This can be seen by noting that $\delta(\cdot - x_i^*) \in \mathbb{B}_1, \forall i$ by construction, and $\mu_i^{(0)}(\cdot) \in \mathbb{B}_1, \forall i$ by assumption. By substitution, it follows that for all $t \in [0,1]$
\begin{align}
    \pi(t) =   (1-t) \, \prod_{i=1}^D \mu_i^{(0)}(\cdot) + t \prod_{i=1}^D \delta(\cdot - x_i^*)   = (1-t) \, \mu^{(0)}_i (\cdot) + t \, \delta_{x^*} (\cdot), 
\end{align}
and
\begin{align}
	& \pi(t)([-1,1]^D) = (1-t) \, \mu^{(0)} \left ( [-1,1]^D\right ) + t \, \delta_{x^*} \left ( [-1,1]^D \right ) =  (1-t) + t = 1.
\end{align}
This implies that $\pi(t)$ belongs to $\mathcal{S}_2$ per claim (1). For claim (2), consider the functional value along the path; namely,
\begin{align}
  \int p(x) \; \mathrm{d}\pi(t)(x) &=  (1-t)\,  \int p(x) \, \mathrm{d} \mu^{(0)}(x) + t\,  \int p(x)\, \delta_{x^*}(x) \, \mathrm{d}x \\
 &= \int p(x) \, \mathrm{d} \mu^{(0)}(x) + t\, \left ( \int p(x) \, \delta_{x^*}(x) \, \mathrm{d}x  - \int p(x) \, \mathrm{d} \mu^{(0)}(x) \right ).
\end{align}
This, coupled with the fact that $\int p(x) \, \mathrm{d} \mu^{(0)}(x) > p(x^*)  $ by assumption of sub-optimality,  implies 
\begin{equation}
    \frac{d}{d\, t} \, \int p(x) \; \mathrm{d}\pi(t)(x) =  p(x^*) -  \int p(x) \, \mathrm{d} \mu^{(0)}(x) < 0, 
\end{equation}
demonstrating monotonic decrease of the functional along the path as claimed.

\noindent
{\bf Case 2:} $\mu^{(0)}\left (  [-1,1]^D \right ) \in (0,1)$. 
Begin by letting
\begin{equation}
    \alpha := \mu^{(0)}\left (  [-1,1]^D \right ).
\end{equation}
Under our hypothesis, it can be assumed w.l.o.g. that
\begin{equation}
    \label{eq:descent_case2}
  \frac{1}{1-\alpha} \, \int p(x) \, \mathrm{d} \mu^{(1)}(x) < \frac{1}{\alpha} \, \int p(x) \, \mathrm{d} \mu^{(0)}(x) .
\end{equation}
Again, consider a parametrized path $\pi(t)$, $t\in[0,1]$ such that
\begin{align}
	   \pi(t) :=   \prod_{i=1}^D \pi^{(0)}_{i} (t) +  \prod_{i=1}^D \pi^{(1)}_{i} (t),  
\end{align}
where
\begin{align}
	\pi^{(0)}_{i} (t) & =  \left ( 1 - t \right )^{1/D} \, \mu_{i}^{(0)}(\cdot), \quad{and} \\
	\pi^{(1)}_{i} (t) &= \left( 1 + t \, \frac{\alpha}{1-\alpha} \right )^{1/D} \, \mu_{i}^{(1)} (\cdot).
\end{align}
Beginning at $ \pi(0) = \mu^{(0)}(\cdot) +  \mu^{(1)}(\cdot) =  \mu(\cdot)$, we claim that (1) this path lies entirely in $\mathcal{S}_2$, and (2) the objective monotically decreases along this path. Indeed, $\forall i,  \mu_{i}^{(0)}(\cdot), \, \mu_{i}^{(1)}(\cdot) \in \mathbb{B}_1$ by assumption. Additionally, 
\begin{align}
    \pi(t) &=   (1-t) \, \prod_{i=1}^D \mu^{(0)}_i (\cdot) +   \left (1 + t \, \frac{\alpha}{1-\alpha} \right )  \, \prod_{i=1}^D \mu_i^{(1)} (\cdot)\\
    &= (1-t) \,  \mu^{(0)} (\cdot) +   \left (1 + t \, \frac{\alpha}{1-\alpha} \right )  \,  \mu^{(1)} (\cdot) ,
\end{align}
which implies that $\forall t \in [0,1]$
 \begin{align}
	\pi(t)([-1,1]^D) &=  (1-t) \, \alpha  +  \left (1 + t\, \frac{\alpha}{1-\alpha} \right ) \, (1-\alpha)\\
	 &= \alpha +  (1-\alpha)  - t\, \alpha + t \alpha \\
    &= 1.
 \end{align}
This demonstrates that $\pi(t)$ belongs to $\mathcal{S}_2$ per claim (1). Finally, the functional value along the path is
\begin{align}
	  \int p(x) \, \mathrm{d} \pi (x)  &=  \int p(x) \, \mathrm{d} \left ( (1-t) \, \mu^{(0)}(x) +  \left(1+    t \,  \frac{\alpha}{1-\alpha} \right)\,  \mu^{(1)}(x) \right )\\
	  &=   \int p(x) \, \mathrm{d}  \mu^{(0)} (x) +   \int p(x) \, \mathrm{d} \mu^{(1)}  (x)  \\
   &+ t\, \alpha \, \left ( \frac{1}{1-\alpha} \, \int_{\mathbb{R}^D } p(x) \, \mathrm{d}  \mu^{(1)}  (x) - \frac{1}{\alpha} \, \int_{\mathbb{R}^D } p(x) \, \mathrm{d}\mu^{(0)}(x)  \right ), 
\end{align}
which by assumption (Equation~\eqref{eq:descent_case2}) and $0 < \alpha < 1 $ has derivative, 
\begin{equation}
   \frac{d}{dt} \int p (x)\, \mathrm{d} \pi(t)(x)  = \alpha \, \left (  \frac{1}{1-\alpha} \, \int_{\mathbb{R}^D } p(x) \, \mathrm{d} \mu^{(1)}(x) -  \frac{1}{\alpha} \, \int_{\mathbb{R}^D } p(x) \, \mathrm{d} \mu^{(0)}(x) \right ) < 0.
\end{equation}
This proves claim (2) and the statement in the second case.
\end{proof}

\begin{corollary}
    \label{cor:descentmeasure}
    Under the hypotheses of Proposition~\ref{prop:descentmeasure}, an element of $\mathcal{S}_2$ of the form,
    \begin{equation}
      \mu(\cdot) := \mu^{(0)}(\cdot) +   \mu^{(1)}(\cdot)  :=  \prod_{i=1}^D \mu^{(0)}_i(\cdot) +  \prod_{i=1}^D \mu^{(1)}_i(\cdot) 
    \end{equation} 
    is a global minimum of
    \begin{equation}
        \min_{ \mu(\cdot) \in \mathcal{S}_2 }\, \int p(x) \, \mathrm{d} \left (    \prod_{i=1}^D \mu_i^{(0)}(\cdot) +  \prod_{i=1}^D \mu^{(1)}_i(\cdot)   \right ) 
    \end{equation} 
    if and only if either (1) $\alpha^{(0)} := \prod_{i=1}^D \mu^{(0)}_i([-1,1]) \not = 0$ and $\mu(\cdot) = \frac{1}{\alpha^{(0)}} \, \prod_{i=1}^D \mu^{(0)}_i(\cdot)$ 
    is itself a local minimum, or (2) $\alpha^{(1)} := \prod_{i=1}^D \mu^{(1)}_i([-1,1]) \not = 0$ and
    $\mu(\cdot) =\frac{1}{\alpha^{(1)}} \, \prod_{i=1}^D \mu^{(1)}_i(\cdot)$
    is itself a local minimum. 
    
Further, for each $l=0,1$ such that $\alpha^{(l)} \not = 0$, 
    \begin{equation*}
      \int p(x) \, \mathrm{d} \left ( \frac{1}{\alpha^{(l)}} \, \prod_{i=1}^D \mu^{(l)}_i(\cdot)   \right ) =  \min_{ \mu(\cdot) \in \mathcal{S}_2 }\,  \int p(x) \, \mathrm{d} \left (    \prod_{i=1}^D \mu_i^{(0)}(\cdot) +  \prod_{i=1}^D \mu^{(1)}_i(\cdot)   \right )
    \end{equation*}
    
\end{corollary}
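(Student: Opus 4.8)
The plan is to prove the stated equivalence by combining the value identity of Theorem~\ref{thm:equivalence} with the two descent constructions of Proposition~\ref{prop:descentmeasure}, reducing everything to a convexity (averaging) argument over the masses of the two product measures. Throughout, write $p^* := \min_{x \in [-1,1]^D} p(x)$, which by Theorem~\ref{thm:equivalence} together with Corollary~\ref{cor:equivproblem} is the common optimal value of the measure problem over $\mathcal{S}_2$, and for each $l$ with $\alpha^{(l)} \neq 0$ set $v^{(l)} := \frac{1}{\alpha^{(l)}} \int p(x)\,\mathrm{d}\mu^{(l)}(x)$, the \emph{normalized average} of $p$ against the $l$-th product measure. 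Since $\frac{1}{\alpha^{(l)}}\prod_{i=1}^D \mu_i^{(l)}(\cdot)$ is a mass-one element of $\mathcal{S}_2$, each normalized average satisfies $v^{(l)} \geq p^*$, and because $\alpha^{(0)} + \alpha^{(1)} = 1$ the objective at $\mu(\cdot)$ is the convex combination $\int p\,\mathrm{d}\mu = \alpha^{(0)} v^{(0)} + \alpha^{(1)} v^{(1)}$.

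First I would establish the key \emph{pure-measure lemma}: a normalized single product measure $\frac{1}{\alpha^{(l)}}\prod_{i=1}^D \mu_i^{(l)}(\cdot)$ is a local minimum if and only if it is a global minimum, equivalently $v^{(l)} = p^*$. This is exactly Case~1 of Proposition~\ref{prop:descentmeasure} applied to the point whose nonzero component carries all the mass (so that its total mass lies in $\{0,1\}$): the contrapositive says that any such sub-optimal point admits a strict descent path and hence cannot be a local minimum. This lemma is what lets me pass freely between ``local minimum of a pure point,'' ``global minimum,'' and ``normalized average equal to $p^*$.''

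For the forward direction I would assume $\mu(\cdot)$ is a global minimum, so $\alpha^{(0)}v^{(0)} + \alpha^{(1)}v^{(1)} = p^*$. As a convex combination of terms each $\geq p^*$, this forces $v^{(l)} = p^*$ for every $l$ with $\alpha^{(l)} \neq 0$; the pure-measure lemma then upgrades each such normalized component to a (global, hence local) minimum achieving value $p^*$. This simultaneously yields condition~(1)/(2) and the final displayed equality of the statement.

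For the backward direction I would assume, without loss of generality, that $\alpha^{(0)} \neq 0$ and that $\frac{1}{\alpha^{(0)}}\prod_{i=1}^D \mu_i^{(0)}(\cdot)$ is a local minimum; by the pure-measure lemma $v^{(0)} = p^*$. If $\alpha^{(0)} = 1$ then $\alpha^{(1)} = 0$, the positive measure $\mu^{(1)}(\cdot)$ is identically zero, $\mu(\cdot)$ coincides with its normalized first component, and $\int p\,\mathrm{d}\mu = p^*$ is immediate. The genuine obstacle is the mixed-mass regime $\alpha^{(0)}, \alpha^{(1)} \in (0,1)$: here $v^{(0)} = p^*$ alone does not control $v^{(1)}$, so I would invoke Case~2 of Proposition~\ref{prop:descentmeasure}. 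If $v^{(1)} \neq v^{(0)}$ then $\mu(\cdot)$ is both sub-optimal and satisfies the Case~2 hypothesis, producing a strict descent path and contradicting local minimality of $\mu(\cdot)$; hence $v^{(1)} = v^{(0)} = p^*$ and $\int p\,\mathrm{d}\mu = p^*$, so $\mu(\cdot)$ is a global minimum. The subtle point — and the step I expect to require the most care — is that closing this argument uses that $\mu(\cdot)$ itself is a (local) minimizer of the $\mathcal{S}_2$ problem: local minimality of a single pure component in isolation does not by itself force $\mu(\cdot)$ to be global, and it is Case~2 that forces the two normalized averages of a local minimizer to coincide and thereby both equal $p^*$. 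I would make this dependence explicit rather than attempt to deduce global optimality of $\mu(\cdot)$ from one pure component alone.
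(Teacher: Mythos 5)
Your proposal is correct and rests on the same two ingredients as the paper's proof (Cases 1 and 2 of Proposition~\ref{prop:descentmeasure}), but it differs in two substantive ways. First, in the forward direction the paper forces $v^{(0)}=v^{(1)}$ by invoking Case~2 (a descent path would contradict global optimality) and only then concludes that both equal $p^*$; your averaging argument --- $\int p\,\mathrm{d}\mu=\alpha^{(0)}v^{(0)}+\alpha^{(1)}v^{(1)}=p^*$ with $\alpha^{(0)}+\alpha^{(1)}=1$ and each $v^{(l)}\ge p^*$ --- reaches the same conclusion more directly, without the descent-path machinery, which is a genuine simplification. Second, your treatment of the backward direction is more careful than the paper's. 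The paper's ``if'' argument applies Case~1 to the point $\frac{1}{\alpha^{(l)}}\,\mu^{(l)}(\cdot)+0$ and concludes only that \emph{this normalized pure component} is a global minimum; it never returns to the sum $\mu^{(0)}(\cdot)+\mu^{(1)}(\cdot)$, and indeed the literal ``if'' direction fails without an extra hypothesis: take $\mu^{(0)}=\frac{1}{2}\,\delta_{x^*}$ and $\mu^{(1)}=\frac{1}{2}\,\delta_{y}$ with $p(y)>p(x^*)$; then condition~(1) holds (the normalized first component is $\delta_{x^*}$, a global and hence local minimum) while the sum has value $\frac{1}{2}\left(p(x^*)+p(y)\right)>p^*$. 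You correctly identify that closing this gap requires either reading the conclusion as being about the normalized component itself (as the paper implicitly does, and as is all that Theorem~\ref{thm:globalmin} actually uses downstream) or adding the hypothesis that $\mu(\cdot)$ is itself a local minimizer, so that Case~2 forces $v^{(1)}=v^{(0)}=p^*$. Your explicit flagging and repair of this point is sound; the rest of your argument (the pure-measure lemma via the contrapositive of Case~1, and the final displayed equality from $v^{(l)}=p^*$) matches the paper.
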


\begin{proof}
    First, note that if either of $ \mu^{(0)} (\cdot) $ or $ \mu^{(1)} (\cdot) $ is feasible and is a local minimum, then we invoke Case (1) of Proposition~\ref{prop:descentmeasure} (i.e., with $\alpha^{(0)} \in \{0,1\}$) with the combination $\frac{1}{\alpha^{(0)}} \, \mu^{(0)} (\cdot) + 0  \in \mathcal{S}_2$ or $0 + \frac{1}{\alpha^{(1)}} \, \mu^{(1)} (\cdot) \in \mathcal{S}_2 $, respectively. This implies there exists a descent path from each point or that the measure is a global minimum. Since it is a local minimum by assumption, there cannot be a feasible descent path originating from that point, and it is therefore a global minimum.

    Next, assume the measure is a global minimum with value $p^*$. If $\mu^{(0)} \left( [-1,1]^D \right ) \in \{0,1\}$, the result follows trivially as point $\mu^{(0)}(\cdot)$ or $\mu^{(1)}(\cdot)$ must also be a non-trivial local minimum.
    
    Otherwise, $\mu^{(0)} \left( [-1,1]^D \right ) \in (0,1)$, and Proposition \ref{prop:descentmeasure} implies 
    \begin{equation}
        \label{eq:measure_equal_val}
        \frac{1}{\alpha^{(0)}} \, \int p(x) \, \mathrm{d} \prod_{i=1}^D \mu^{(0)}_i(x) =  \frac{1}{\alpha^{(1)}} \,\int p(x) \, \mathrm{d} \prod_{i=1}^D \mu^{(1)}_i(x) .
    \end{equation}
    Without an equality, this corresponds to Case (2) of Proposition~\ref{prop:descentmeasure}; therefore, there exists a descent path, a contradiction to local optimality. Equation~\eqref{eq:measure_equal_val} then implies 
    \begin{align}
        p^* &= \int p(x) \, \mathrm{d} \left (  \prod_{i=1}^D \mu^{(0)}_i(x)  +   \prod_{i=1}^D \mu^{(1)}_i(x)\right )  \, \mathrm{d} x = \frac{1}{\alpha^{(0)}} \, \int p(x) \, \mathrm{d} \prod_{i=1}^D \mu^{(0)}_i(x) , 
    \end{align}
    along with an analogous equation for $\frac{1}{\alpha^{(1)}} \, \prod_{i=1}^D \mu^{(1)}_i(x) $. Thus, both normalized product measures achieve the optimal value and must be global (and therefore local) minima, proving the claim.
\end{proof}

\globalmin*
\begin{proof}

To begin, assume that the feasible point,
$$ \left \{ \left (\mu^{(l)}_1, ..., \mu^{(l)}_D \right ) \right \}_{l=1}^2, $$ is a global minimum of Problem \ref{eq:reformulationcontinuous}.  Further, note that
\begin{equation}
   \{ \mu_{\bf n} \}_{\lvert {\bf n} \rvert \leq d }  := \left \{ \sum_{l=1}^2 \prod_{i=1}^D \mu^{(l)}_{i, {\bf n}_i} \right \}_{\lvert {\bf n} \rvert \leq d } \in \mathcal{F}_2
\end{equation} 
by construction. For $l=1,2$, let $\mu^{(l)}(\cdot)$ be a regular Borel product measure supported over $[-1,1]^D$ and associated with the moments $ \left (\mu^{(l)}_1, ..., \mu^{(l)}_D \right )$~\footnote{The existence of this measure and associated moments is guaranteed by Proposition \ref{prop:measureextension}}. In particular, it can be seen that
\begin{eqnarray}
    \mu(\cdot) := \mu^{(0)}(\cdot) + \mu^{(1)} (\cdot) \in  \mathcal{S}_2,
\end{eqnarray}
and 
\begin{equation}
    \zeta \left (  \mu(\cdot) \right ) =  \{ \mu_{\bf n} \}_{\lvert {\bf n} \rvert \leq d },
\end{equation}
where $\zeta : \mathcal{S}_2 \rightarrow \mathcal{F}_2$ is the surjective map of Proposition~\ref{prop:eqsets}. 
Corollary \ref{cor:descentmeasure} then implies that the following holds for at least one $l \in \{0,1\}$: $\alpha^{(l)} \not = 0  $, and
\begin{equation}
    \frac{1}{\alpha^{(l)}} \, \prod_{i=1}^D \mu^{(l)}_i(\cdot)
\end{equation}
is a global minimum of Problem \ref{eq:problemmeasure} (it is a local minimum that reaches the globally optimal value). Finally, Corollary \ref{cor:equivproblem} (Case 2) implies 
\begin{equation}
    \zeta \left ( \frac{1}{\alpha^{(l)}} \, \prod_{i=1}^D \mu^{(l)}_i(\cdot) \right ) = \left \{ \frac{1}{\alpha^{(l)}} \prod_{i=1}^D \mu^{(l)}_{i, {\bf n}_i} \right \}_{\lvert {\bf n} \rvert \leq d }
\end{equation}
is a global (and therefore a local) minimum of Problem \ref{eq:problemmoments}. The pre-image of these moments under the map $\phi(\cdot)$ of Problem \ref{eq:reformulationcontinuous} contains the point,
    \begin{equation}
        \left \{  \left ( \frac{\mu_1^{(l)}}{\alpha^{(l)}}, \mu_2^{(l)} ..., \mu_D^{(l)} \right ) ,  (0,...,0)  \right \}.
    \end{equation}
This demonstrates that the latter must correspond to a global minimum of Problem \ref{eq:problemcontinuous}, as claimed. For the converse, assume w.l.o.g. that
\begin{equation}
    \left \{  \left ( \frac{\mu_1^{(0)}}{\alpha^{(0)}}, ..., \mu_D^{(0)} \right ) ,  (0,...,0)  \right \}
\end{equation}
is a local minimum. Therefore,
\begin{equation}
   \left \{ \frac{\prod_{i=1}^D \mu_{i,{\bf n}_i}^{(0)}}{ \alpha^{(0)}} \right \} \in \mathcal{F}_2
\end{equation}
must be a local minimum of Problem \ref{eq:problemmoments} by continuity of the map $\phi(\cdot)$. Further, Proposition \ref{prop:measureextension} and Proposition \ref{prop:eqsets} guarantee the existence of a product measure,
$$
 \frac{\prod_{i=1}^D \mu_i^{(0)}(\cdot)}{ \alpha^{(0)}}  \in \mathcal{S}_2, $$
associated with these moments in the (non-empty) pre-image of $\zeta(\cdot)$.  Corollary~\ref{cor:equivproblem} (Case 3) implies the latter must be a
local minimum of Problem~\ref{eq:problemmeasure}. In turn, Corollary~\ref{cor:descentmeasure} implies this must actually be a global minimum of Problem \ref{eq:problemmeasure}. Finally, Corollary \ref{cor:equivproblem} (Case 2) implies the moments,
\begin{equation}
    \left \{ \left [ \zeta \left ( \frac{\mu^{(0)}(\cdot) }{\alpha^{(0)}} \right ) \right ]_{\bf n} \right \}_{\lvert {\bf n} \rvert \leq  d} = \left \{  \frac{ \prod_{i=1}^D \mu_{i,{\bf n}_i}^{(0)} }{\alpha^{(0)}}  \, \right \}_{\lvert {\bf n} \rvert \leq  d},
\end{equation}
must be associated with a global minimum of Problem \ref{eq:problemmoments}. These moments correspond to the image of the point under consideration under the map $\phi(\cdot)$, which demonstrates that the latter is a global minimum of Problem~\ref{eq:reformulationcontinuous}.
\end{proof}


\section{Algorithm}
\label{sec:algorithm}
This section details algorithmic implementation specifics and computational considerations for the numerically solving our re-formulation (Problem~\ref{eq:reformulationcontinuous}). This algorithm and its implementation were used to produce the results in Section~\ref{sec:numerical_results}.~\footnote{We reiterate that the user by no way needs to use this algorithmic implementation in order to solve the reformulated problem; any algorithm or its implementation suitable for computing a local minimum of the nonlinear Problem~\ref{eq:reformulationcontinuous} is appropriate. }

Our algorithm is an extension of the method of Burer and Monteiro (BM), presented in \cite{burer2003nonlinear}. The BM method replaces positive-definite constraints of the form, $A \succeq 0$, with constraints of the form, $A = R R^T$, for some matrix $R$ of appropriate size. Problem~\ref{eq:problemcontinuous} can thus be written as
\begin{align}
    \label{eq:reformulationcontinuous_BM}
    \min_{  \mu \in \mathbb{R}^{  (d+1) \times D \times L}   }&\;\; \sum_{{\bf n} \in \mathrm{supp}(p)} \, p_{\bf n} \, \phi_{\bf n} (\mu),\nonumber \\
    \mathrm{subject\, to} & \;\; \mathcal{M}_d(\mu^{(l)}_i) = R^{(l)}_i R^{(l)^T}_i ,  \\
    &\;\; \mathcal{M}_{d-1}(\mu^{(l)}_i; \, 1-x_i^2) = S^{(l)}_i S^{(l)^T}_i , \; & i=1,..., D, l=1,..., L \nonumber\\
    &\;\; R_{i}^{(l)} \in \mathbb{R}^{(d+1) \times (d+1)} , \, S_{i}^{(l)}  \in \mathbb{R}^{ d \times d }, \nonumber\\
    & \; \; \mu^{(l)}_{1,0} \geq 0, \nonumber \\
    & \; \; \mu^{(l)}_{i,0} = 1, i = 2, ..., D \nonumber \\
    & \;\;   \phi_{\bf n} (\mu) =  \sum_{l=1}^L  \prod_{i=1}^D \mu^{(l)}_{i,{\bf n}_i} .\nonumber
\end{align}

Upon re-expressing the (element-wise) matrix equality constraints as a sequence of $K = L\cdot D ( (d+1)^2 + d^2 + 1) + 1$ scalar equality constraints, we obtain the generalized BM problem reformulation,
\begin{align}
    \label{eq:reformulationcontinuous_BM_general}
    \min_{ \mu \in \mathbb{R}^{  (d+1) \times D \times L}   }&\;\; \sum_{{\bf n} \in \mathrm{supp}(p)} \, p_{\bf n} \, \phi_{\bf n}(\mu) \\
    \mathrm{subject\, to} &\;\; c_k\left ( \mu, \{R_i^{(l)}, S_i^{(l)}\} \right ) = 0 , &\;\;  \, k=1,..., K. \nonumber
\end{align}
We consider this form in Equation~\eqref{eq:reformulationcontinuous_BM_general} for numerical optimization.  As proposed in~\cite{burer2003nonlinear}, our numerical implementation constructs the augmented Lagrangian for this problem as 
\begin{align}
    \label{eq:augmented_lagrangian}
    \mathcal{L}_\gamma (\mu, \{R_i^{(l)}, S_i^{(l)}\}; \lambda) &:=  \sum_{{\bf n} \in \mathrm{supp}(p)} \, p_{\bf n} \, \phi_{\bf n}(\mu) +  \sum_{k=1}^K \lambda_k \cdot c_k\left ( \mu, \{R_i^{(l)}, S_i^{(l)}\} \right ) \\
    &+ \frac{\gamma}{2} \, \sum_{k=1}^K  c^2_k\left ( \mu, \{R^{(l)}, S^{(l)}\} \right ), \nonumber
\end{align}
where $\gamma > 0$ is a penalty parameter. This is similar to the Lagrangian introduced in~\cite{burer2003nonlinear}, with the difference that the objective exhibits non-linearities.  We solve~\ref{eq:augmented_lagrangian} numerically using a technique similar to one proposed by Boyd et al.~\cite{boyd2011distributed}, which involves finding a saddle-point (i.e., a simultaneous minimum and maximum in the primal and dual variables respectively) to the augmented Lagrangian.  Specifically, our approach focuses on finding $\mu^*$ and $\lambda^*$ such that
\begin{equation}
    \mathcal{L}_\gamma (\mu^*, \{R_i^{(l)^*}, S_i^{(l)^*}\}; \lambda^*) = \max_\lambda \,\min_{\mu, \{R_i^{(l)}, S_i^{(l)}\}}   \mathcal{L}_\gamma (\mu, \{R_i^{(l)}, S_i^{(l)}\} ; \lambda).
\end{equation}
To this end, our implementation alternates solving the \emph{primal} and \emph{dual} problems until convergence is achieved. The {\bf primal problem takes the form,}
\begin{equation}
    \min_{\mu, \{R_i^{(l)}, S_i^{(l)}\}} \, \mathcal{L}_\gamma (\mu, \{R^{(l)}, S^{(l)}\}; \lambda),
\end{equation}
where the Lagrange multipliers (dual variables) $\lambda$ are fixed.  Thanks to the penalty term, this is an unconstrained optimization problem with an objective that is bounded below.  This can be solved using a descent technique; for this purpose, our algorithm leverages L-BFGS \cite{liu1989limited}.  Meanwhile, the {\bf dual problem takes the form,}
\begin{equation}
    \max_{\lambda} \, \mathcal{L}_\gamma (\mu, \{R^{(l)}, S^{(l)}\}; \lambda),
\end{equation}
where the primal variables $\mu$,  $R^{(l)}$  and $S^{(l)}$ are fixed. Because this problem is generally unbounded at non-optimal points, we use an update rule proposed by \cite{boyd2011distributed}:
\begin{eqnarray}
    \lambda_{\mathrm{next}} &=& \lambda_{\mathrm{current}} + \gamma \, \nabla_{\lambda} \,\mathcal{L}_\gamma (\mu, \{R_i^{(l)}, S_i^{(l)}\}; \lambda)\\
    &=& \lambda_{\mathrm{current}} + \gamma \, \sum_{k=1}^K  c^2_k(\mu, \{R_i^{(l)}, S_i^{(l)}\}).
    \nonumber
\end{eqnarray}
This process is summarized in Algorithm \ref{alg:alternating_saddle_point}. We have implemented this algorithm in \texttt{C/C++} with numerical results are presented in Section~\ref{sec:numerical_results}.

\begin{algorithm} 
\small
\caption{\footnotesize{Alternating descent/ascent algorithm for computing a saddle-point of the augmented Lagrangian}}
\label{alg:alternating_saddle_point} 
\begin{algorithmic}[1]
    \Procedure{AlternatingSaddlePoint}{$\mu_0, \lambda_0; \gamma$}
        \State $\mu \gets \mu_0$
        \State $\lambda \gets \lambda_0$
        \While{Not Converged} 
            \State $\mu \gets \mathrm{argmin}_{\mu} \, \mathcal{L}_\gamma (\mu; \lambda)$\Comment{Using L-BFGS}
            \State $\lambda \gets \lambda + \gamma \, \nabla_{\lambda} \,\mathcal{L}_\gamma (\mu; \lambda) $
        \EndWhile\label{euclidendwhile}
        \State \textbf{return} $(\mu^*,\lambda^*)$
    \EndProcedure
\end{algorithmic}
\end{algorithm}

\subsection{Optimal Location}
\label{sec:optimal_location}
When the global minimum $x^* \in [-1,1]^D$ of Problem \ref{eq:problemcontinuous} is unique, it can be shown that the solution of the reformulated problem (Problem \ref{eq:reformulationcontinuous}) must consist of the (redundant) convex combinations of delta measures centered at the point $x^*$. Under these conditions, it is found that
\begin{equation}
    \frac{\mu^{(l)}_{i,{\bf n}_i} }{\prod_{i=1}^D \mu_{i,0}^{(l)} } = \int_{-1}^1 x_i^{{\bf n}_i} \,  \delta(x_i - x_i^*) \, \mathrm{d} x_i = (x_i^*)^{{\bf n}_i},
\end{equation}
for every fixed index $l$ such that $ \prod_{i=1}^D, \mu_{i,0}^{(l)} \not = 0 $.  In this special case, the above scheme does not merely provide a framework for finding the global optimal value, but it also provides a simple way of computing a globally-optimal location. Indeed, it suffices to pick any $l$ which satisfies the above condition and compute
\begin{equation}
    x_i^* = \frac{\mu^{(l)}_{i,1} }{ \prod_{i=1}^D  \mu_{i,0}^{(l)} },\quad{   }\forall i = 1, ..., D.
\end{equation}
\\This approach is summarized in Algorithm \ref{alg:solution_extraction}.\footnote{When the global minimum is not unique (or not known to be so), a small amount of noise is added to the objective, making it unique with high probability.} 

\begin{algorithm}
\small
\caption{\footnotesize{Optimal location extraction (assuming unique global minimum)}}
\label{alg:solution_extraction} 
\begin{algorithmic}[1]
    \Procedure{OptimalLocation}{$\mu$}\Comment{Input is output of Algorithm \ref{alg:alternating_saddle_point} }
        \State$l^* \gets \mathrm{argmax}_{l=1,...,L} \, \prod_{i=1}^D \mu_{i,0}^{(l)} $
        \For{$i = 1, ..., D$} 
            \State $ x_i^* \gets \frac{\mu^{(l^*)}_{i,1} }{\prod_{i=1}^D \mu_{i,0}^{(l^*)} } $
        \EndFor
        \State \textbf{return} $x^*$\Comment{Global minimum location}
    \EndProcedure
\end{algorithmic}
\end{algorithm}

\subsection{Further Implementation details}
\label{sec:algdetails}
Based on the high-level algorithmic information provided in the previous sections, this section provides further details and implementation technicalities, 

\subsubsection{Constraint simplification}
As implied in Problem~\ref{eq:reformulationcontinuous}, our reformulation dictates that the semi-definite constraints should be expressed as
\begin{align*}
    &\mathcal{M}_d(\mu^{(l)}_i) \succeq 0 \quad{and} \\
    & \mathcal{M}_{d-1}(\mu^{(l)}_i; \, 1-x_i^2) \succeq 0,
\end{align*}
for $l=1, ..., L$ and $i=1,...,D$. Within our implementation, however, we use the constraints,
\begin{align*}
    &\mathcal{M}_d(\mu^{(l)}_i) \succeq 0, \; \lvert \mu^{(l)}_{i,{\bf n}_i} \rvert \leq 1,
\end{align*}
for $i=1,...,D $ and $0 \leq {\bf n}_i \leq d$. This reduces computational complexity and memory costs by a factor of roughly $2$, simplifying the implementation. This also constitutes a relaxation of the constraint: $\mathcal{M}_{d-1}(\mu^{(l)}_i; \, 1-x_i^2) \succeq 0$. However, this does not materially change the conclusion of our theoretical analysis thanks to the following lemma:

\begin{lemma}
    Assume that $2L > d \geq 0$ and assume that the 1D moments $\{\mu_{k} \}_{k=0}^{2L}$ are such that $\mathcal{M}_L(\mu) \succeq 0$ and satisfy,
    \begin{equation}
        \lvert \mu_k \rvert \leq 1
    \end{equation}
    for every $0 \leq k \leq 2L$. Then, the following holds for every polynomial $p \in \mathbb{P}_{D,d}$:
    \begin{eqnarray}
        \sum_{j=0}^d p_j \, \mu_j = \int p(x) \, \mathrm{d} \mu(x) = \int_{[-1-\alpha, 1+\alpha]} p(x) \, \mathrm{d} \mu(x)  + \epsilon(\alpha, L),
    \end{eqnarray}
    where
    \begin{equation}
        \lvert \epsilon(\alpha, L) \rvert \leq (d+1)\, \frac{\max_{j=0}^d \lvert p_j \rvert  }{ \alpha \, (1+\alpha)^{2L-(d+1)}   } = O \left ( \frac{1}{\alpha \, (1 + \alpha)^{2L-(d+1)} } \right )
    \end{equation}
    for every $\alpha >0$.
\end{lemma}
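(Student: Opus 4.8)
The plan is to read $p$ as a univariate polynomial of degree at most $d$ (so its coefficients are $p_0,\dots,p_d$), to interpret $\mu(\cdot)$ as a genuine positive measure representing the moments, and then to reduce the whole statement to a single Markov-type tail estimate controlled by the top moment $\mu_{2L}$. First I would record existence of the measure: since $\mathcal{M}_L(\mu)\succeq 0$, the truncated Hamburger moment problem furnishes a positive (indeed finitely atomic) regular Borel measure $\mu(\cdot)$ on $\mathbb{R}$ whose first $2L+1$ moments are exactly $\mu_0,\dots,\mu_{2L}$. This is the analogue of Proposition~\ref{prop:measureextension}, except that \emph{dropping} the localizing constraint $\mathcal{M}_{L-1}(\mu;1-x^2)\succeq 0$ means Fekete's theorem is replaced by the sum-of-two-squares representation of univariate polynomials nonnegative on all of $\mathbb{R}$, and consequently the support is only guaranteed to lie in $\mathbb{R}$ rather than $[-1,1]$. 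With this measure in hand, the identity $\sum_{j=0}^d p_j\mu_j=\int p\,\mathrm{d}\mu$ is just linearity of integration over monomials of degree $\le d\le 2L$, and I would \emph{define} the error by
\[
\epsilon(\alpha,L):=\int_{\mathbb{R}\setminus[-1-\alpha,\,1+\alpha]} p(x)\,\mathrm{d}\mu(x),
\]
so that the claimed decomposition holds immediately by additivity of the integral over disjoint sets.

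The heart of the argument is the tail bound, and the key observation is that even though mass may now leak outside $[-1,1]$, the single even moment $\mu_{2L}=\int x^{2L}\,\mathrm{d}\mu\ge 0$ with $\mu_{2L}\le 1$ pins down how much. I would estimate
\[
\lvert\epsilon(\alpha,L)\rvert\le \int_{\lvert x\rvert>1+\alpha}\lvert p(x)\rvert\,\mathrm{d}\mu(x)\le \sum_{j=0}^d \lvert p_j\rvert\int_{\lvert x\rvert>1+\alpha}\lvert x\rvert^{\,j}\,\mathrm{d}\mu(x),
\]
and then control each moment-tail. For $0\le j\le d<2L$, on the region $\lvert x\rvert>1+\alpha$ I factor $\lvert x\rvert^{\,j}=x^{2L}/\lvert x\rvert^{\,2L-j}\le x^{2L}/(1+\alpha)^{2L-j}$, whence, using that $x^{2L}\ge 0$ everywhere,
\[
\int_{\lvert x\rvert>1+\alpha}\lvert x\rvert^{\,j}\,\mathrm{d}\mu \le \frac{1}{(1+\alpha)^{2L-j}}\int_{\mathbb{R}} x^{2L}\,\mathrm{d}\mu=\frac{\mu_{2L}}{(1+\alpha)^{2L-j}}\le \frac{1}{(1+\alpha)^{2L-j}}.
\]

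Collecting these gives $\lvert\epsilon(\alpha,L)\rvert\le \big(\max_{0\le j\le d}\lvert p_j\rvert\big)\sum_{j=0}^d (1+\alpha)^{-(2L-j)}$, and I would finish by summing the geometric series: $\sum_{j=0}^d (1+\alpha)^{j}=\big((1+\alpha)^{d+1}-1\big)/\alpha\le (1+\alpha)^{d+1}/\alpha$, so that $\sum_{j=0}^d (1+\alpha)^{-(2L-j)}\le \big(\alpha(1+\alpha)^{2L-(d+1)}\big)^{-1}$. Since the hypothesis $2L>d$ forces $2L-(d+1)\ge 0$, the exponent is nonnegative and the expression is finite for every $\alpha>0$; multiplying by the (slack) factor $d+1\ge 1$ then yields exactly the stated bound and the $O\!\big(1/(\alpha(1+\alpha)^{2L-(d+1)})\big)$ asymptotic.

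The routine computational core is therefore just Markov's inequality plus a geometric sum; the genuine obstacle is the first step, namely justifying a representing measure from $\mathcal{M}_L(\mu)\succeq 0$ alone. In the strictly positive-definite case this is clean (a Jacobi-matrix / Gaussian-quadrature construction produces a finitely atomic $\mu$ reproducing moments through degree $2L$), while the degenerate semidefinite boundary case needs a flat-extension or limiting argument in the style of Curto--Fialkow~\cite{curto1991recursiveness}. I would also foreground the conceptual point that makes the lemma meaningful: because the localizing constraint is relaxed to $\lvert \mu_k\rvert\le 1$, mass genuinely can spill beyond $[-1,1]$, but the uniform bound $\mu_{2L}\le 1$ forces that spilled mass, weighted by any fixed-degree polynomial, to decay geometrically in $\alpha$ at the rate $(1+\alpha)^{-(2L-(d+1))}$, which is precisely what the estimate quantifies and what makes the constraint simplification in our implementation harmless.
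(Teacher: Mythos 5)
Your proof is correct and follows essentially the same route as the paper's: both arguments hinge on producing a representing positive measure from $\mathcal{M}_L(\mu)\succeq 0$ (the paper via Gaussian-quadrature atoms with weights bounded by $1/x_i^{2L}$, you via the truncated Hamburger moment problem) and then applying the identical Markov-type tail estimate $\int_{\lvert x\rvert>1+\alpha}\lvert x\rvert^{\,j}\,\mathrm{d}\mu\le \mu_{2L}/(1+\alpha)^{2L-j}$ followed by the same geometric-series summation. The only differences are cosmetic: you obtain the factor $(d+1)$ as pure slack rather than as an atom count, and you are more explicit than the paper about the degenerate (singular positive-semidefinite) case, where the representing-measure step genuinely needs a flat-extension or limiting argument rather than plain Gaussian quadrature.
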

\begin{proof}
To begin, recall that in 1D the condition $\mathcal{M}_L(\mu) \succeq 0$ guarantees the existence (Gaussian quadratures, see, e.g., \cite{golub2009matrices, szeg1939orthogonal}) of non-negative weights $\{ w_i \}_{i=0}^d$ and locations $\{ x_i \}_{i=0}^d$ such that
\begin{eqnarray}
    \int p(x) \, \mathrm{d} \mu(x) = \sum_{i=0}^d w_i \, p(x_i) = \int p(x) \, \left ( \sum_{i=0}^d w_i \, \delta(x-x_i) \right) \,\mathrm{d}x.
\end{eqnarray}
In particular, note that
\begin{equation}
   0 \leq \mu_{2L} = \int x^{2L} \, \mathrm{d} \mu(x) = \sum_{i=1}^R w_i \, x_i^{2L} \leq 1.
\end{equation}
Let $\alpha > 0$. Because all quantities involved are positive, it must hold that
\begin{eqnarray}
    w_i \leq \left\{
	\begin{array}{ll}
		1  & \mbox{if } x_i \in [-1-\alpha, 1 + \alpha] \\
		\frac{1}{x_i^{2L}} & \mbox{if } \lvert x_i \rvert > 1 + \alpha.
	\end{array}
\right.
\end{eqnarray}
Therefore,
\begin{align}
    \left \lvert \int_{\vert x \rvert \geq 1+\delta} p(x) \, \mathrm{d} \mu(x) \right \rvert &= \left \lvert \sum_{i : \lvert x_i \rvert \geq 1+\delta } w_i \,\sum_{j=0}^d  p_j x_i^j \right \rvert \\
    &\leq \sum_{i : \lvert x_i \rvert \geq 1+\delta } \,\sum_{j=0}^d  \lvert p_j \rvert \,  \frac{1}{\lvert x_i \rvert^{2L-j} } \\
    & \leq (d+1) \, \frac{\max_{j=0}^d \lvert p_j \rvert  }{ (1+\alpha)^{2L}  } \, \sum_{j=0}^d (1+\alpha)^j \\
    &= (d+1) \, \frac{\max_{j=0}^d \lvert p_j \rvert }{ (1+\alpha)^{2L}  } \, \frac{(1+\alpha)^{d+1} - 1}{\alpha} \\
    &\leq (d+1)\, \frac{\max_{j=0}^d \lvert p_j \rvert  }{ \alpha \, (1+\alpha)^{2L-(d+1)}.  }
\end{align}
This proves the desired result.
\end{proof}
The conclusion of the lemma also holds for product measures in higher dimension $D$ with an additional factor of $D$ in the bound. Effectively, by increasing the number of known moments of the 1D measures, one can ensure there exists a measure with such moments approximately~\footnote{In the above functional sense.} supported over $[-1-\delta, 1+\delta]^D$. Using $L=d$ was sufficient to obtain satisfactory accuracy in our experiments.

Finally, the scalar inequality constraints were enforced using the BM method as described earlier. A BM factor $R_k R_k^T$ is trivial and corresponds to a squared scalar. For instance, the constraint $\mu^{(l)}_{i,{\bf n}_i} \leq 1$ becomes
$$1-\mu^{(l)}_{i,{\bf n}_i}  = (t^{(l)}_{i,{\bf n}_i})^2, \;\; t^{(l)}_{i,{\bf n}_i} \in \mathbb{R}.$$

\subsubsection{Numerical stability}
The aforementioned constraints are sufficient to ensure the moments of the product measures are bounded within $[-1,1]$; i.e.,
\begin{equation}
    \label{eq:redundant_constraint}
   -1 \leq   \prod_{i=1}^D \mu^{(l)}_{i,{\bf n}_i} \leq 1
\end{equation}
for $l=1,...,L$ and ${\bf n} \in \mathrm{supp}(p)$. It may occur, however, that such constraints are violated, resulting in large values for the product moments and potential numerical instabilities. This is of particular concern in higher dimensions, where small violations may compound to create large values through multiplication.  In order to mitigate this potential numerical instability and improve the numerical behavior, our implementation \emph{explicitly enforces the (redundant) constraints} found in Equation~\eqref{eq:redundant_constraint} by adding them to the formulation found in Equation~\eqref{eq:reformulationcontinuous_BM_general}. 

\subsubsection{Polynomial basis}
\label{subsec:polybasis}
Our choice of monomials as a basis for the polynomials in Section \ref{sec:notation} is made to facilitate the exposition; however, any polynomial basis may have been used. In our implementation, we leverage a \emph{Chebyshev basis} $\{T_{{\bf n}_i}(x_i) \}$ (e.g.,\cite{mason2002chebyshev}).  This choice offers better numerical stability without significantly increasing computational complexity. The only differences are that the 1D moments are defined as
\begin{equation}
    \mu_{i,{\bf n}_i} := \int_{-1}^1 \, T_{{\bf n}_i}(x_i) \, \mathrm{d} \mu_i (x_i),
\end{equation}
and the moment matrices $ \mathcal{M}_d(\mu_i)$ have the (Hankel plus Toeplitz) form,
\begin{equation}
     [ \mathcal{M}_d(\mu_i) ]_{{\bf m}_i,{\bf {\bf n}}_i} = \int_{\mathbb{R}} \, T_{{\bf m}_i}(x) \,  T_{{\bf n}_i} (x_i) \, \mathrm{d} \mu_i(x_i) =  \frac{1}{2} \, \mu_{{\bf m}_i+{\bf n}_i} + \frac{1}{2} \, \mu_{\lvert {\bf m}_i - {\bf n}_i \rvert}.
\end{equation}
This is a consequence of the fact that Chebyshev polynomials satisfy the identity,
\begin{equation}
    T_{{\bf m}_i}(x_i) \,  T_{{\bf n}_i}(x_i) = T_{{\bf m}_i + {\bf n}_i}(x_i) + T_{\lvert {\bf m}_i - {\bf n}_i \rvert} (x_i).
\end{equation}

\subsubsection{Initialization}
Our current implementation uses a simple initialization scheme (no warm start) for the primal and dual variables of the augmented Lagrangian (Equation~\eqref{eq:augmented_lagrangian}):

\begin{itemize}
    \item \emph{Primal Initialization}: each scalar element of the primal variables, $\mu , \, \{R_i^{(l)}, S_i^{(l)}\}$,
    is initialized randomly using a distribution $U\left ( [-1,1] \right )$ uniform over $[-1,1]$.
    \item \emph{Dual Initialization}: the dual variables ($\lambda$) are initialized to be zero ($0$).
\end{itemize}

\subsubsection{L-BFGS and Line Search}
As previously discussed, we use L-BFGS \cite{liu1989limited} to compute descent directions (${\bf g}$) at each iteration when solving the primal problem. Iterates are computed using a line search with backtracking and a Wolfe condition; i.e., the scheme computes the next point that corresponds to the smallest $k$, such that
\begin{equation}
    f({\bf x}_{\mathrm{current}} + 0.5^k \, {\bf d}) \leq  f({\bf x}_{\mathrm{current}}) + 0.5^k \, \beta \, ( \nabla f({\bf x}_{\mathrm{current}}) \cdot {\bf g })
\end{equation}
for some parameter $0 < \beta < 1$ (referred to as the Wolfe factor). The order of approximation of the Hessian typically ranges between $20$ and $200$. We further leverage a mechanism by which the algorithm reverts to a gradient descent direction whenever the line search fails using the direction ${\bf g}$ computed by L-BFGS.

\subsubsection{Stopping criteria}
We use two (2) types of stopping criteria, both for the inner (based on L-BFGS)  and outer (saddle-point) iterations: 
\begin{enumerate}
    \item A criterion based on the size of the relative gradient; 
    \item A criterion based on the absolute change in value between the previous and current iterates. 
\end{enumerate}

As a part of the stopping criterion for the outer iteration, we further enforce feasibility; i.e., 
$$ \lvert c_k (x) \rvert < \epsilon_{f},$$
for some {\em absolute feasibility} stopping parameter $0 < \epsilon_f < 1$.

\end{document}